\font\logic=msam10 at 10pt
\newcommand{\forces}{\mbox{\logic\char'015}}
\newcommand{\restrict}{\mbox{\logic\char'026}}
\newcommand{\less}{\mathord{<}}
\newcommand{\cf}{\operatorname{cf}}
\newcommand{\dom}{\operatorname{dom}}
\newtheorem{thrm}{Theorem}[section]
\newtheorem{lem}[thrm]{Lemma}
\newtheorem{cor}[thrm]{Corollary}
\newtheoremstyle{hdefinition}%
  {\topsep}%
  {\topsep}%
  {\upshape}%    Definitions, etc not in italics.
  {}%
  {\bfseries}%
  {.}%           would be better with period.
  { }%
  {\thmnumber{#2 }\thmname{#1}\thmnote{ \rm(#3)}}%
\newtheoremstyle{hclaim}%
  {\topsep}%
  {\topsep}%
  {\itshape}%
  {}%
  {\bfseries}%
  {.}%          better with period.
  { }%
  {\thmname{#1}\thmnote{ \rm#3}}%
\newtheoremstyle{hnotation}%
  {\topsep}%
  {\topsep}%
  {\upshape}%
  {}%
  {\bfseries}%
  {.}%          better with period.
  { }%
  {\thmname{#1}\thmnote{ \rm#3}}%
\theoremstyle{hclaim}
\newtheorem*{claim*}{Claim}
\theoremstyle{hdefinition}
\newtheorem{ques}[thrm]{Question}
\theoremstyle{hclaim}
\newtheorem{claim}{Claim}
\theoremstyle{hnotation}
\newtheorem{notation}{Notation}
\begin{document}

\title{Splitting stationary sets from weak forms of Choice
\thanks{
The first author is supported in part by NSF grant DMS-0401603. The
research of the second author is supported by the United
States-Israel Binational Science Foundation.
%This is an expanded and
%edited version of the second author's note F882, \emph{On weak
%choice}.
The research in this paper began during a visit by the
first author to Rutgers University in October 2007, supported by NSF
grant DMS-0600940. This is the second author's publication number
925.}}

\author{Paul Larson and Saharon Shelah}

%\address{Department of Mathematics and Statistics\\
%Miami University\\
%Oxford\\Ohio\\United States\\
%45056} \email{larsonpb@@muohio.edu}

\pagenumbering{arabic}
\maketitle

\begin{abstract} Working in the context of restricted forms of the
Axiom of Choice, we consider the problem of splitting the ordinals
below $\lambda$ of cofinality $\theta$ into $\lambda$ many
stationary sets, where $\theta < \lambda$ are regular cardinals.
This is a continuation of \cite{Sh835}.
\end{abstract}

In this note we consider the issue of splitting stationary sets in
the presence of weak forms of the Axiom of Choice plus the existence
of certain types of ladder systems. Our primary interest is the
theory ZF + DC plus the assertion that for some large enough
cardinal $\lambda$, there is a ladder system for the members of
$\lambda$ of countable cofinality, that is, a function that assigns
to every such $\alpha < \lambda$ a cofinal subset of ordertype
$\omega$. In this context, we show that for every $\gamma < \lambda$
of uncountable cofinality the set of $\alpha< \gamma$ of countable
cofinality can be uniformly split into $\cf(\gamma)$ many stationary
sets. It follows from this and the results of \cite{Sh835} that
there is no nontrivial elementary embedding from $V$ into $V$, under
the assumption of ZF + DC plus the assertion that the countable
subsets of each ordinal can be wellordered. As a counterpoint to
some of the results presented here, we give a symmetric forcing
extension in which there are regressive functions on stationary sets
not constant on stationary sets.

\section{AC and DC}

Given a nonempty set $Z$, the statement AC$_{Z}$ says that whenever
$\langle X_{a} : a \in Z \rangle$ is a collection of nonempty sets,
there is a function $f$ with domain $Z$ such that $f(a) \in X_{a}$
for each $a \in Z$. If $\gamma$ is an ordinal, the statement
AC$_{\less\gamma}$ says that AC$_{\eta}$ holds for all ordinals
$\eta < \gamma$.

A \emph{tree} $T$ is a set of functions such that the domain of each
function is an ordinal, and such that, whenever $f \in T$ and
$\alpha \in \dom(f)$, $f \restrict \alpha \in T$. Two elements $f$,
$g$ of a tree $T$ are \emph{compatible} if $f \subseteq g$ or $g
\subseteq f$. A \emph{branch} through a tree is a pairwise compatible
collection of elements of $T$. A branch is \emph{maximal} if it is
not properly contained in any other branch.

Given an ordinal $\gamma$, the statement DC$_{\gamma}$ says that for
every tree $T \subseteq {^{\less\gamma}X}$ (for some set $X$) there is
a $b \subseteq T$ which is a maximal branch. The statement
DC$_{\less\gamma}$ says that DC$_{\eta}$ holds for all ordinals
$\eta < \gamma$. It follows immediately from the definition of
DC$_{\gamma}$ that DC$_{\gamma}$ implies DC$_{\eta}$ for all $\eta <
\gamma$. We write DC for DC$_{\omega}$ and AC for the statement that
AC$_{Z}$ holds for all sets $Z$.

Lemma \ref{dcac} shows that DC$_{\gamma}$ implies AC$_{\gamma}$ for
all ordinals $\gamma$.

\begin{lem}\label{dcac} Suppose that $\gamma$ is a limit ordinal such that
DC$_{\gamma}$ holds, and $T$ is a tree such that
\begin{itemize}
\item every $f \in T$ is a function with domain $\eta$, for some
$\eta < \gamma$;
\item for all limit ordinals $\eta < \gamma$, if $f$ is a function
with domain $\eta$ such that $f \restrict \alpha \in T$ for all
$\alpha \in \eta$, then $f \in T$;
\item for every $f \in T$ there is a $g \in T$ properly containing
$f$.
\end{itemize}
Then there is a function $f$ with domain $\gamma$ such that $f
\restrict \alpha \in T$ for all $\alpha < \gamma$.
\end{lem}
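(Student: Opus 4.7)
The plan is to apply DC$_{\gamma}$ directly to the given tree $T$ and then argue that the union of the resulting maximal branch is a function of domain $\gamma$. Since $T \subseteq {^{\less\gamma}X}$ for some $X$ (implicit in the tree condition), DC$_{\gamma}$ produces a branch $b \subseteq T$ that is maximal in the sense defined in the preamble.

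First, I would set $f = \bigcup b$. Because $b$ is pairwise compatible, $f$ is a well-defined function, and its domain $\eta$ is $\sup\{\dom(g) : g \in b\}$. The whole content of the lemma is the claim that $\eta = \gamma$, and this will be forced by playing the three hypotheses against the maximality of $b$. So I would assume toward a contradiction that $\eta < \gamma$ and split into two cases according to the nature of $\eta$.

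In the successor case $\eta = \alpha + 1$, some $g \in b$ must have $\alpha \in \dom(g)$, hence $\dom(g) = \eta$ and $g = f$; in particular $f \in T$. In the limit case, for every $\alpha < \eta$ there is $g \in b$ with $\dom(g) > \alpha$, so $f \restrict \alpha = g \restrict \alpha$ belongs to $T$ by the tree condition, and then hypothesis (2) gives $f \in T$. In either case, hypothesis (3) provides some $h \in T$ properly extending $f$. Since every element of $b$ is contained in $f$ and hence in $h$, the set $b \cup \{h\}$ is still a pairwise compatible subset of $T$, and $h \notin b$ because $h$ properly extends $f = \bigcup b$. This contradicts the maximality of $b$, so $\eta = \gamma$. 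Finally, for each $\alpha < \gamma$ there is $g \in b$ with $\dom(g) > \alpha$, whence $f \restrict \alpha = g \restrict \alpha \in T$, giving the required function.

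The only genuine subtlety is making sure a proper extension of $f$ remains compatible with all of $b$; this is immediate from the observation that every $g \in b$ satisfies $g \subseteq f$, which in turn follows from the fact that $f = \bigcup b$ and $\dom(g) \leq \dom(f)$. No deeper use of DC$_{\gamma}$ is needed beyond the single application at the start.
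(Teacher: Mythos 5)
Your proof is correct and follows essentially the same route as the paper's: apply DC$_{\gamma}$ to get a maximal branch $b$, set $f = \bigcup b$, and show that if $\dom(f) < \gamma$ then $f \in T$ and its proper extension in $T$ contradicts maximality of $b$. Your write-up merely makes explicit the details the paper leaves implicit (the successor/limit case split showing $f \in T$, and the check that $b \cup \{h\}$ is still a branch), which is a fair expansion rather than a different argument.
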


\begin{proof} Let $b$ be a maximal branch of $T$, and let $f =
\bigcup b$. Then $f$ is a function whose domain is an ordinal $\eta
\leq \gamma$. If $\eta < \gamma$, then $f \in T$ and $f$ has a
proper extension in $T$, contradicting its supposed maximality.
\end{proof}

\section{Ladder systems}

\begin{notation} Given an ordinal $\delta$, we let $\cf(\delta)$ denote the
cofinality of $\delta$. Given an ordinal $\alpha$ and a set $A$, we
let $C^{\alpha}_{A}$ denote the ordinals below $\alpha$ whose
cofinality is in $A$. Given an ordinal $\lambda$ and a function $f$,
we let $\phi(\lambda, f)$ be the statement that there exists a
sequence $\langle c_{\delta} : \delta \in C^{\lambda}_{\dom(f)}
\rangle$ such that each $c_{\delta}$ is a cofinal subset of $\delta$
of ordertype less than $f(\cf(\delta))$.
\end{notation}

Note that $\phi(\lambda, f)$ implies that $f(\gamma) \geq \gamma +
1$ for all regular cardinals $\gamma \in \dom(f)$.

\begin{notation} We let $\psi(\lambda, \theta)$ be the statement $\phi(\lambda,
\{(\theta, \theta+1)\})$. We say that a sequence $\langle c_{\delta}
: \delta \in C^{\lambda}_{\dom(f)} \rangle$ \emph{witnesses}
$\phi(\lambda, f)$ if each $c_{\delta}$ is a cofinal subset of
$\delta$ of ordertype less than $f(\cf(\delta))$, and similarly for
$\psi(\lambda, \theta)$.
\end{notation}

The statement $\psi(\lambda, \omega)$ follows from the statement
Ax$^{2}_{\lambda}$ of \cite{Sh835} (in the case $\partial =
\omega$), which says that there exists a well-orderable $\mathcal{A}
\subseteq [\lambda]^{\aleph_{0}}$ such that every element of
$[\lambda]^{\aleph_{0}}$ has infinite intersection with a member of
$\mathcal{A}$. We will be primarily interested in statements
$\phi(\lambda, f)$ where $f$ is either the ordinal successor
function or the cardinal successor function on some set of regular
cardinals. The two following lemmas show that when the domain of $f$
is a single regular cardinal, there is in some sense no statement
strictly in between these two.

\begin{lem}[ZF]\label{omegathin} For each ordinal ordinal $\gamma$ there exists a
sequence $$\langle e_{\delta} : \delta < \gamma\rangle$$ such that
each $e_{\delta}$ is a cofinal subset of $\delta$ of ordertype less
than or equal to $|\gamma|$.
\end{lem}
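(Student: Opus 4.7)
The plan is to avoid any use of choice by fixing one bijection between $|\gamma|$ and $\gamma$ (available in ZF by the very definition of cardinality) and using that single bijection to define cofinal subsets of every $\delta < \gamma$ uniformly.

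Set $\kappa = |\gamma|$ and fix any bijection $\pi\colon \kappa \to \gamma$. For each $\delta < \gamma$, define a non-decreasing function $g_\delta\colon \kappa \to \delta + 1$ by
$$g_\delta(\alpha) = \sup(\pi[\alpha] \cap \delta),$$
with the convention $\sup \emptyset = 0$, and set $e_\delta = \{g_\delta(\alpha) : \alpha < \kappa\} \cap \delta$.

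Two things then need to be verified. For cofinality at a limit $\delta$: if $g_\delta(\alpha) < \delta$ for every $\alpha < \kappa$, then given $\beta < \delta$ and writing $\beta = \pi(\alpha_0)$ one has $g_\delta(\alpha_0 + 1) \geq \beta$, and this value already lies in $e_\delta$. Otherwise, let $\alpha^*$ be least with $g_\delta(\alpha^*) = \delta$; a short argument shows $\alpha^*$ cannot be a successor (adjoining a single element below $\delta$ to a set bounded below $\delta$ keeps it bounded), so $\alpha^*$ is a limit and the identity $g_\delta(\alpha^*) = \sup_{\alpha < \alpha^*} g_\delta(\alpha)$ forces $\{g_\delta(\alpha) : \alpha < \alpha^*\} \subseteq e_\delta$ to be cofinal in $\delta$. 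The zero and successor cases are immediate (in particular $g_\delta(\pi^{-1}(\beta)+1) = \beta$ when $\delta = \beta + 1$). For the ordertype bound, observe that $\beta \mapsto \min\{\alpha < \kappa : g_\delta(\alpha) = \beta\}$ is a strictly increasing injection from the range of $g_\delta$ into $\kappa$, so $e_\delta$ has ordertype at most $\kappa = |\gamma|$.

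No real obstacle arises; the only conceptual point worth emphasizing is that fixing one bijection $\pi$ once and for all is what makes the definition of every $e_\delta$ uniform, and this is precisely what sidesteps any appeal to the Axiom of Choice.
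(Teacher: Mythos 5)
Your proof is correct and takes essentially the same route as the paper: both arguments fix a single bijection $\pi\colon|\gamma|\to\gamma$ and use it, uniformly in $\delta$, to produce a cofinal subset of each $\delta<\gamma$ enumerated monotonically along $|\gamma|$, which is exactly what gives the ordertype bound without any appeal to Choice. The only difference is cosmetic: the paper takes the ``record'' values $\pi(\alpha)<\delta$ that exceed every earlier $\pi(\beta)<\delta$, whereas you take the running suprema $\sup(\pi[\alpha]\cap\delta)$, which is essentially the same set up to closure.
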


\begin{proof} Let $\pi \colon |\gamma| \to \gamma$ be a bijection. For each
$\delta < \gamma$, let $e_{\delta}$ be the set of ordinals of the
form $\pi(\alpha)$, where $\alpha < |\gamma|$, $\pi(\alpha) <
\delta$ and $\pi(\alpha)
> \pi(\beta)$ for all $\beta < \alpha$ with $\pi(\beta) < \delta$.
\end{proof}

\begin{notation} Given a set $x$ of ordinals, we let $o.t.(x)$ denote the ordertype of $x$.
Given an ordinal $\eta < o.t.(x)$, we let $x(\eta)$ be the $\eta$-th
member of $x$, i.e., the unique $\alpha \in x$ such that $o.t.(x
\cap \alpha) = \eta$.
\end{notation}

\begin{lem}[ZF] Let $\lambda$ be an ordinal, let $\theta$ be a regular cardinal,
and let $\eta$ be an ordinal less than $\theta^{+}$. Then
$\phi(\lambda, \{(\theta, \eta)\})$ implies $\psi(\lambda, \theta)$.
\end{lem}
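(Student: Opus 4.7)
The plan is to take a witness $\langle c_{\delta} : \delta \in C^{\lambda}_{\{\theta\}}\rangle$ to $\phi(\lambda, \{(\theta, \eta)\})$ and thin each $c_{\delta}$ down, uniformly, to a cofinal subset of $\delta$ of ordertype at most $\theta$. For each such $\delta$, let $\alpha_{\delta} = o.t.(c_{\delta})$. Then $\alpha_{\delta} < \eta$, and since $c_{\delta}$ is cofinal in $\delta$ with $\cf(\delta) = \theta$, we have $\cf(\alpha_{\delta}) = \theta$ as well; in particular $\theta \leq \alpha_{\delta} < \eta < \theta^{+}$. (If $C^{\lambda}_{\{\theta\}}$ is empty, the conclusion is trivial, so this may be assumed.)

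The key observation is that $|\eta| \leq \theta$, so applying Lemma \ref{omegathin} to the ordinal $\eta$ produces a single sequence $\langle e_{\beta} : \beta < \eta\rangle$ such that each $e_{\beta}$ is a cofinal subset of $\beta$ of ordertype at most $\theta$. In particular $e_{\alpha_{\delta}}$ is a cofinal subset of $\alpha_{\delta}$ of ordertype $\leq \theta$ (and in fact of ordertype exactly $\theta$, since $\cf(\alpha_{\delta}) = \theta$), and the entire sequence exists with no appeal to choice.

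With this in hand, I would define $c'_{\delta} = \{c_{\delta}(\xi) : \xi \in e_{\alpha_{\delta}}\}$ for each $\delta \in C^{\lambda}_{\{\theta\}}$, using the notation $c_{\delta}(\xi)$ from the preceding Notation box for the $\xi$-th element of $c_{\delta}$. Because $c_{\delta}$ enumerates a cofinal subset of $\delta$ of ordertype $\alpha_{\delta}$ and $e_{\alpha_{\delta}}$ is cofinal in $\alpha_{\delta}$, the set $c'_{\delta}$ is cofinal in $\delta$ and has ordertype at most $\theta$, i.e., less than $\theta + 1$. Hence $\langle c'_{\delta} : \delta \in C^{\lambda}_{\{\theta\}}\rangle$ witnesses $\psi(\lambda, \theta)$.

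There is no real obstacle here: both ingredients — the given witness and the sequence from Lemma \ref{omegathin} — are already explicit objects, and combining them is uniformly definable in $\delta$, so nothing beyond ZF is used. The one point that needs checking is the transfer of cofinality to $\alpha_{\delta}$, which is immediate from the definition, together with the arithmetic fact that $|\eta| \leq \theta$ when $\eta < \theta^{+}$, which is what makes Lemma \ref{omegathin} deliver ordertype $\leq \theta$ rather than something larger.
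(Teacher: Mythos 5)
Your proof is correct and is essentially identical to the paper's: both apply Lemma \ref{omegathin} to $\eta$ (using $|\eta| \leq \theta$ since $\eta < \theta^{+}$) to get a single choice-free sequence $\langle e_{\beta} : \beta < \eta \rangle$, and then thin each $c_{\delta}$ to $\{c_{\delta}(\xi) : \xi \in e_{o.t.(c_{\delta})}\}$. The extra observations you make (that $\cf(o.t.(c_{\delta})) = \theta$, hence the resulting ordertype is exactly $\theta$) are correct but not needed, since ordertype at most $\theta$ already witnesses $\psi(\lambda, \theta)$.
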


\begin{proof} Let $\langle c_{\delta} : \delta \in
C^{\lambda}_{\{\theta\}} \rangle$ witness $\phi(\lambda, \{(\theta,
\eta)\})$, and let $\langle e_{\delta} : \delta < \eta\rangle$ be
such that each $e_{\delta}$ is a cofinal subset of $\delta$ of
ordertype less than or equal to $\theta$. For each $\delta \in
C^{\lambda}_{\{\theta\}}$, letting $\alpha_{\delta}$ be the
ordertype of $c_{\delta}$, let $d_{\delta} = \{c_{\delta}(\beta)
\mid \beta \in e_{\alpha_{\delta}}\}$. Then each $d_{\delta}$ is a
cofinal subset of $\delta$ of ordertype $\theta$.
\end{proof}

%\section{1}

%Given an ordinal $\alpha$ and a regular cardinal $\kappa$, we let
%$S^{\alpha}_{\kappa}$ denote the ordinals below $\alpha$ of
%cofinality $\kappa$.

%The assumptions of the following claim follow from axiom ... of
%[Sh835].

%\begin{remark}then $min(S_{\xi} < 1 + \xi$. (??)
%\end{remark}

\section{Splitting $C^{\lambda}_{\theta}$ from
DC$_{\theta}$ and AC$_{\gamma}$}

\begin{notation} Given ordinals $\alpha, \beta, \eta$ and a sequence of sets of
ordinals $\bar{C} = \langle c_{\delta} : \delta \in S\rangle$ (for
some set $S$), we let $S^{\eta}_{\alpha, \beta}(\bar{C})$ be the set
of $\delta \in S$ such that $o.t.(c_{\delta}) > \eta$ and
$c_{\delta}(\eta) \in [\alpha, \beta)$.
\end{notation}

%Given an ordinal $\lambda$, we let $\cf(\lambda)$ denote the
%cofinality of $\lambda$.

%Given an ordinal $\alpha$ and a regular cardinal $\theta$, we let
%$S^{\alpha}_{\theta}$ denote the ordinals below $\alpha$ of
%cofinality $\theta$, and

%We let $S^{\alpha}_{\geq\theta}$ denote the set of ordinals below
%$\alpha$ whose cofinality is at least $\theta$.

We are primarily interested in the following theorem in the case
where $\theta$ and $\gamma$ are both $\omega$, in which case
$\psi(\lambda, \omega)$ implies the existence of a sequence
$\bar{C}$ satisfying the stated hypotheses.

\begin{thrm}[ZF]\label{claimonelem} Suppose that the following hold.
\begin{itemize}
\item $\theta \geq \aleph_{0}$ is a regular cardinal such that DC$_{\theta}$ holds;
\item $\gamma\geq \theta$ is an ordinal such that AC$_{\gamma}$ holds;
\item $\lambda$ is an ordinal of cofinality greater than $\gamma$;
\item $E$ is a club subset of $\lambda$; \item $\bar{C} = \langle
c_{\delta} : \delta \in C^{\lambda}_{\{\theta\}} \cap E \rangle$ is
a sequence such that each $c_{\delta}$ is a cofinal subset of
$\delta$ of ordertype less than or equal to $\gamma$. \end{itemize}
Then \begin{enumerate} \item there exists an $\eta^{*} <\gamma$ such
that for each $\alpha < \lambda$ there exists a $\beta \in (\alpha,
\lambda)$ such that $S^{\eta^{*}}_{\alpha, \beta}(\bar{C})$ is a
stationary subset of $\lambda$;
\item there exist
functions $g \colon C^{\lambda}_{(\gamma,\lambda)} \to \gamma$, $h
\colon C^{\lambda}_{(\gamma,\lambda)} \to \lambda$ and a collection
of ordinals $\langle \alpha^{\xi}_{\beta} : \xi \in
C^{\lambda}_{(\gamma,\lambda)}, \beta < h(\xi)\rangle$ such that
\begin{itemize}
\item for each $\xi \in C^{\lambda}_{(\gamma,\lambda)}$, $h(\xi) < \cf(\xi)^{+}$;
\item for each $\xi \in C^{\lambda}_{(\gamma,\lambda)}$, $\langle \alpha^{\xi}_{\beta} : \beta <
h(\xi) \rangle$ is a continuous increasing sequence cofinal in
$\xi$,
\item for each $\xi \in C^{\lambda}_{(\gamma,\lambda)}$ and each $\beta < h(\xi)$,
$S^{g(\xi)}_{\alpha^{\xi}_{\beta}, \alpha^{\xi}_{\beta + 1}}(\bar{C}
\restrict \xi)$ is stationary.
\end{itemize}
\end{enumerate}
\end{thrm}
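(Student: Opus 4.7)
The plan is to prove (1) by a contradiction-plus-pigeonhole argument and then derive (2) uniformly by applying (1) to each $\xi \in C^{\lambda}_{(\gamma,\lambda)}$ via a canonical recursion.

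For (1), suppose for contradiction that no $\eta^{*} < \gamma$ has the stated property. Then for each $\eta < \gamma$ there is some threshold $\alpha_{\eta} < \lambda$ above which $\eta$ fails; pick such an $\alpha_{\eta}$ for every $\eta$ using AC$_{\gamma}$ and set $\alpha^{*} = \sup_{\eta < \gamma}\alpha_{\eta} + 1$, which lies below $\lambda$ since $\cf(\lambda) > \gamma$. By monotonicity of $S^{\eta}_{\alpha,\beta}(\bar{C})$ in its first coordinate, $S^{\eta}_{\alpha^{*},\beta}(\bar{C})$ is then non-stationary for every $\eta < \gamma$ and every $\beta \in (\alpha^{*},\lambda)$. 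Now consider the stationary set $T := \{\delta \in C^{\lambda}_{\{\theta\}} \cap E : \delta > \alpha^{*}\}$. For each $\delta \in T$ the index $\eta_{\delta} := \min\{\eta < o.t.(c_{\delta}) : c_{\delta}(\eta) \geq \alpha^{*}\}$ is well-defined (since $c_{\delta}$ is cofinal in $\delta > \alpha^{*}$) and lies in $\gamma$; a pigeonhole step---using AC$_{\gamma}$ to select a club witnessing non-stationarity of each hypothetically non-stationary fiber and taking their intersection (a club in $\lambda$, because $\cf(\lambda) > \gamma$)---produces $\eta^{*} < \gamma$ for which $T^{*} := \{\delta \in T : \eta_{\delta} = \eta^{*}\}$ is stationary. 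On $T^{*}$ the map $\delta \mapsto c_{\delta}(\eta^{*})$ is regressive with values in $[\alpha^{*},\lambda)$; exhibiting $\beta^{*} < \lambda$ for which $\{\delta \in T^{*} : c_{\delta}(\eta^{*}) \leq \beta^{*}\}$ is stationary makes $S^{\eta^{*}}_{\alpha^{*},\beta^{*}+1}(\bar{C})$ stationary, contradicting the choice of $\alpha^{*}$ and $\eta^{*}$.

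The main obstacle is this final bounding step, which plays the role of Fodor's lemma for the regressive function $\delta \mapsto c_{\delta}(\eta^{*})$. The classical Fodor argument requires AC$_{\lambda}$ to choose $\lambda$-many witnessing clubs, whereas we have only AC$_{\gamma}$ with $\gamma < \cf(\lambda)$; closing this gap will have to exploit the specific structure of the function---in particular that each $c_{\delta}$ has ordertype $\leq \gamma$, so that the relevant witnessing data naturally indexes by $\gamma$ rather than by $\lambda$---combined with DC$_{\theta}$ to drive the required transfinite construction within our choice budget.

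For (2), each $\xi \in C^{\lambda}_{(\gamma,\lambda)}$ satisfies the hypotheses of the theorem with $(\xi, E \cap \xi, \bar{C} \restrict \xi)$ in place of $(\lambda, E, \bar{C})$: indeed $\cf(\xi) > \gamma$, $E \cap \xi$ is a club in $\xi$, and the ordertype bound on $c_{\delta}$ is inherited. Let $g(\xi)$ be the least $\eta^{*} < \gamma$ supplied by (1) applied to $\xi$; this canonical choice avoids any further use of AC indexed by $\lambda$. Build $\langle \alpha^{\xi}_{\beta} : \beta < h(\xi)\rangle$ by transfinite recursion: $\alpha^{\xi}_{0} = 0$; $\alpha^{\xi}_{\beta+1}$ is the least ordinal in $(\alpha^{\xi}_{\beta}, \xi)$ making $S^{g(\xi)}_{\alpha^{\xi}_{\beta}, \alpha^{\xi}_{\beta+1}}(\bar{C} \restrict \xi)$ stationary in $\xi$ (which exists by (1) applied to $\xi$); at limits take $\sup$; halt at the first $h(\xi)$ where this sup reaches $\xi$. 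Continuity and strict increase are built in, and the bound $h(\xi) < \cf(\xi)^{+}$ will follow from the fact that the windows $[\alpha^{\xi}_{\beta}, \alpha^{\xi}_{\beta+1})$ yield $h(\xi)$-many pairwise disjoint stationary subsets of $\xi$, together with the standard upper bound on the number of such disjoint stationary subsets in an ordinal of cofinality $\cf(\xi)$.
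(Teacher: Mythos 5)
Your part (2) is fine and is essentially the paper's own argument: apply part (1) to each $\xi \in C^{\lambda}_{(\gamma,\lambda)}$, take least witnesses so no further choice is needed, run the canonical recursion, and bound $h(\xi)$ by the disjointness of the resulting stationary sets. The problem is part (1): the step you yourself flag as ``the main obstacle'' is not a technical detail to be filled in later --- it is the entire content of the theorem, and you have not proved it. Worse, the pigeonhole-first structure you set up makes it resistant to the available tools. Having fixed $\eta^{*}$ and the stationary fiber $T^{*} = \{\delta \in T : \eta_{\delta} = \eta^{*}\}$, the only DC$_{\theta}$-style move left is to build an increasing $\theta$-sequence $\langle \beta_{\xi} : \xi < \theta \rangle$ threading through clubs witnessing nonstationarity of the sets $\{\delta \in T^{*} : c_{\delta}(\eta^{*}) \leq \beta_{\xi}\}$, hoping for a contradiction at the limit $\beta_{\theta}$. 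But $\beta_{\theta}$, while of cofinality $\theta$, a member of $E$, and a member of every chosen club, need not belong to $T^{*}$: $T^{*}$ is stationary, not club, and nothing forces $\eta_{\beta_{\theta}} = \eta^{*}$. So no contradiction results. That this bounding statement is genuinely the crux rather than a routine Fodor variant is confirmed by Theorem \ref{claimtwo} and the last section of the paper: without DC$_{\theta}$, ladder-derived regressive functions can fail to be bounded on stationary sets. In effect you have reduced the theorem to a restatement of itself.

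The paper's proof of (1) succeeds precisely by \emph{not} committing to a single $\eta^{*}$ in advance. Supposing no $\eta^{*}$ works, it fixes $\alpha^{*}$ much as you do, then uses DC$_{\theta}$ together with AC$_{\gamma}$ to build a continuous increasing sequence $\langle \alpha_{\xi} : \xi < \theta \rangle$, choosing at stage $\xi$ a club $D_{\xi,\eta} \subseteq E$ disjoint from $S^{\eta}_{\alpha^{*}, \alpha_{\xi}}(\bar{C})$ for \emph{every} $\eta < \gamma$ simultaneously (this is where AC$_{\gamma}$ is spent), and taking $\alpha_{\xi+1}$ inside all previously chosen clubs. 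The limit $\alpha_{\theta}$ then lies in $C^{\lambda}_{\{\theta\}} \cap E$, hence carries a ladder $c_{\alpha_{\theta}}$; its first point above $\alpha^{*}$ sits at some position $\eta < \gamma$ and inside some interval $[\alpha^{*}, \alpha_{\xi})$, so $\alpha_{\theta} \in S^{\eta}_{\alpha^{*}, \alpha_{\xi}}(\bar{C}) \cap D_{\xi,\eta}$, a contradiction. Keeping all $\eta$ in play is exactly what lets the diagonal point be caught: whichever fiber it lands in, a witnessing club is waiting for it. Your pigeonhole step discards that flexibility, which is why your final step cannot be closed along the lines you describe; the fix is to drop the pigeonhole reduction and diagonalize against all pairs $(\eta, \xi) \in \gamma \times \theta$ at once.
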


\begin{proof} We prove the first part first.
Supposing that there is no such $\eta^{*}$, for each $\eta < \gamma$
let $\alpha^{*}_{\eta} < \lambda$ be the least $\alpha < \lambda$
such that $S^{\eta}_{\alpha, \beta}(\bar{C})$ is nonstationary for
all $\beta \in (\alpha, \lambda)$. Using the fact that $\cf(\lambda)
> \gamma$, let $\alpha^{*}$ be the least element of $C^{\lambda}_{\{\theta\}}
\cap E$ greater than or equal to the supremum of
$\{\alpha^{*}_{\eta} : \eta < \gamma \}$. Now, applying
DC$_{\theta}$ and AC$_{\gamma}$, we choose a continuous increasing
sequence of ordinals $\langle \alpha_{\xi} : \xi < \theta \rangle$
and sets $D_{\xi,\eta}$ $(\xi < \theta, \eta < \gamma)$ by recursion
on $\xi < \theta$ such that
\begin{enumerate}
\item $\alpha_{0} = \alpha^{*}$
%\item $\alpha_{\xi}$ is increasing continuous
\item each $D_{\xi,\eta}$ is a club subset of $E$ disjoint from
$S^{\eta}_{\alpha^{*}, \alpha_{\xi}}(\bar{C})$
\item if $\xi < \theta$ a limit ordinal then
$\alpha_{\xi} = \bigcup\{\alpha_{\zeta} : \zeta < \xi\}$
\item if $\xi = \zeta + 1$, then $\alpha_{\xi} = \min(\bigcap_{\rho
\leq \zeta, \eta < \gamma} D_{\rho, \eta} \setminus (\alpha_{\zeta}
+ 1))$
\end{enumerate}

Let $\alpha_{\theta} = \bigcup\{ \alpha_{\xi} : \xi < \theta \}$.
Then $\alpha_{\theta} < \lambda$ as $\cf(\lambda) > \theta$, so
$\alpha_{\theta} \in C^{\lambda}_{\{\theta\}} \cap E$.
%Let $\langle
%\delta_{n} : n < \omega \rangle$ enumerate $c_{\alpha_{\omega}}$.
For some $\eta < \gamma$, $c_{\alpha_{\theta}}(\eta)
> \alpha^{*}$, hence for some $\xi < \theta$, $c_{\alpha_{\theta}}(\eta) \in
[\alpha_{*}, \alpha_{\xi})$. Then $\alpha_{\theta} \in
S^{\eta}_{\alpha_{0}, \alpha_{\xi}}(\bar{C})$, contradicting the
assumption that $\alpha_{\theta} \in D_{\xi,\eta}$.

To prove the second part of the lemma, fix $\xi \in
C^{\lambda}_{(\gamma,\lambda)}$. Applying the first part of the
lemma with $\xi$ as $\lambda$, let $g(\xi)$ be the least $\eta \in
\gamma$ such that for each $\alpha < \xi$ there exists a $\beta \in
(\alpha, \xi)$ such that $S^{\eta}_{\alpha, \beta}(\bar{C}\restrict
\xi)$ is a stationary subset of $\xi$. Then by recursion on $\beta <
\xi$ we can choose an increasing continuous sequence of ordinals
$\alpha^{\xi}_{\beta} < \lambda$ $(\beta < \xi)$ such that
$\alpha^{\xi}_{0} = 0$, $\alpha^{\xi}_{\beta} =
\cup\{\alpha^{\xi}_{\zeta}: \zeta < \beta\}$ for limit $\beta$, and,
if $\beta = \zeta + 1$, then, if $\alpha^{\xi}_{\zeta} = \xi$ then
$\alpha^{\xi}_{\beta} = \xi$, otherwise $\alpha^{\xi}_{\beta}$ is
the minimal ordinal $\delta \in (\alpha^{\xi}_{\zeta}, \xi)$ such
that $S^{g(\xi)}_{\alpha^{\xi}_{\zeta}, \delta}(\bar{C}\restrict
\xi)$ is stationary. Let $h(\xi)$ be the least $\beta$ such that
$\alpha^{\xi}_{\beta} = \xi$ if some such $\beta$ exists, and let it
be $\xi$ otherwise. Since there is a club subset of $\xi$ of
cardinality $\cf(\xi)$, and the sets
$S^{g(\xi)}_{\alpha^{\xi}_{\beta}, \alpha^{\xi}_{\beta +
1}}(\bar{C}\restrict \xi)$ ($\beta < h(\xi)$) are disjoint
stationary subsets of $\xi$, $h(\xi) < \cf(\xi)^{+}$. This completes
the definitions of $g$, $h$ and $\langle \alpha^{\xi}_{\beta} : \xi
\in C^{\lambda}_{(\gamma,\lambda)}, \beta < h(\xi)\rangle$.
\end{proof}

\begin{cor} Suppose that the following hold.
\begin{itemize}
\item $\theta \geq \aleph_{0}$ is a regular cardinal such that DC$_{\theta}$ holds;
%\item $\gamma \geq \theta$ is an ordinal such that AC$_{\gamma}$ holds;
\item
$\lambda$ is an ordinal of cofinality greater than $\theta$; \item
$A$ is the set of regular cardinals in the interval $[\theta,
\lambda)$.
\end{itemize}
Then $\psi(\lambda, \theta)$ implies $\phi(\lambda, f)$, where $f$
is the cardinal successor function on $A$.
\end{cor}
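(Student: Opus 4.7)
The plan is to apply Theorem~\ref{claimonelem} to obtain, for each regular cardinal $\mu \in (\theta, \lambda)$, cofinal sequences of the desired ordertype $<\mu^+$; the case $\mu = \theta$ is handled directly by the witness to $\psi(\lambda,\theta)$.

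First, I would fix $\bar{C} = \langle c_\delta : \delta \in C^\lambda_{\{\theta\}}\rangle$ witnessing $\psi(\lambda,\theta)$. Each $c_\delta$ is a cofinal subset of $\delta$ of ordertype at most $\theta$; since $\cf(\delta) = \theta$ forces the ordertype to be exactly $\theta$, each $c_\delta$ has ordertype $\theta < \theta^+ = f(\theta)$, which already gives the desired witness for ordinals of cofinality $\theta$.

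Next, I would invoke Theorem~\ref{claimonelem} with $\gamma = \theta$, $E = \lambda$, and the sequence $\bar{C}$ just chosen. The required hypotheses all hold: $\theta$ is regular with DC$_\theta$; by Lemma~\ref{dcac}, DC$_\theta$ supplies AC$_\theta$, so AC$_\gamma$ is available with $\gamma = \theta$; $\cf(\lambda) > \theta = \gamma$; $E = \lambda$ is trivially club; and each $c_\delta$ has ordertype $\theta \leq \gamma$. The second conclusion of the theorem then yields functions $g$, $h$ and ordinals $\langle \alpha^\xi_\beta : \xi \in C^\lambda_{(\theta,\lambda)}, \beta < h(\xi)\rangle$ such that, for each $\xi \in C^\lambda_{(\theta,\lambda)}$, $\langle \alpha^\xi_\beta : \beta < h(\xi)\rangle$ is a continuous increasing sequence cofinal in $\xi$ of length $h(\xi) < \cf(\xi)^+$.

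Finally, I would define the desired witness $\langle c^*_\delta : \delta \in C^\lambda_A\rangle$ by setting $c^*_\delta := c_\delta$ when $\cf(\delta) = \theta$, and $c^*_\delta := \{\alpha^\delta_\beta : \beta < h(\delta)\}$ when $\cf(\delta) > \theta$ (so $\delta \in C^\lambda_{(\theta,\lambda)}$). In either case $c^*_\delta$ is a cofinal subset of $\delta$ of ordertype $< \cf(\delta)^+ = f(\cf(\delta))$, establishing $\phi(\lambda,f)$. There is no substantial obstacle here: the whole burden of the argument is carried by Theorem~\ref{claimonelem}, and the only verification required is that its hypotheses are met, the key point being that DC$_\theta$ implies AC$_\theta$ via Lemma~\ref{dcac}.
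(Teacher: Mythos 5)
Your proof is correct and takes exactly the route the paper intends: the corollary is stated without proof as an immediate consequence of Theorem~\ref{claimonelem}, and your application of its second conclusion with $\gamma = \theta$, $E = \lambda$ (using Lemma~\ref{dcac} to get AC$_{\theta}$ from DC$_{\theta}$) is precisely that derivation. Handling the ordinals of cofinality exactly $\theta$ by the $\psi(\lambda,\theta)$-witness itself, and those of cofinality in $(\theta,\lambda)$ by the ranges of the sequences $\langle \alpha^{\xi}_{\beta} : \beta < h(\xi)\rangle$, whose ordertypes $h(\xi)$ are below $\cf(\xi)^{+}$, covers all of $C^{\lambda}_{A}$ as required.
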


The following is a consequence of the results of \cite{Sh835},
Woodin's proof of Kunen's Theorem (see \cite{Kan03}) and the
arguments in this section.

\begin{cor}[ZF + DC] Assume that for every ordinal $\lambda$ there exists a
wellorderable set $\mathcal{A} \subseteq [\lambda]^{\aleph_{0}}$ such
that every element of $[\lambda]^{\aleph_{0}}$ has infinite
intersection with a member of $\mathcal{A}$. Then there is no
nontrivial elementary embedding from $V$ into $V$.
\end{cor}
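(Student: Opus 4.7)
The plan is to derive a contradiction from a hypothetical nontrivial elementary $j \colon V \to V$, by combining the stationary splittings produced by Theorem~\ref{claimonelem} with Woodin's proof of Kunen's theorem.

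Suppose for contradiction that such $j$ exists; let $\kappa$ be its critical point and set $\lambda = \sup_{n<\omega} j^n(\kappa)$. Then $j(\lambda) = \lambda$, $\cf(\lambda) = \omega$, and by elementarity $j$ fixes every $V$-cardinal successor of $\lambda$. Moreover, in ZF + DC every cardinal successor has uncountable cofinality: if $\mu^+ = \bigcup_n \beta_n$ with each $\beta_n < \mu^+$, then AC$_\omega$ gives $|\mu^+| \leq \mu$, a contradiction.

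Next I would extract $\psi(\mu,\omega)$ from the hypothesis for $\mu = \lambda^{++}$. As observed in the paragraph following the definition of $\psi$, the wellorderable-cover hypothesis is Ax$^2_\mu$ of \cite{Sh835} in the case $\partial = \omega$, and implies $\psi(\mu,\omega)$. Fix a witnessing sequence $\bar C$ and apply Theorem~\ref{claimonelem} with $\theta = \gamma = \omega$; DC$_\omega$, AC$_\omega$, and $\cf(\mu) > \omega$ all hold. The second conclusion applied at $\xi = \lambda^+$ then yields a continuous increasing sequence cofinal in $\lambda^+$ together with $\cf(\lambda^+)$ many pairwise disjoint stationary subsets of $\lambda^+$, all concentrated on $C^{\lambda^+}_{\{\omega\}}$ and defined uniformly from $\bar C$, the associated ordinals $\alpha^{\lambda^+}_\beta$, and the single value $g(\lambda^+) < \omega$.

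Finally I would feed this partition into Woodin's proof of Kunen's theorem (as presented in \cite{Kan03}). That argument applies $j$ to a partition of $C^{\lambda^+}_{\{\omega\}}$ into disjoint stationary pieces, obtaining by elementarity another such partition of the same set; comparing membership at the $j$-fixed ordinal $\lambda$, or more generally exploiting that $j \restrict \kappa$ is the identity while $j$ moves ordinals in $[\kappa,\lambda^+)$, yields the contradiction. The main obstacle I anticipate is a careful audit of Woodin's argument, to confirm that each use of full AC in it either reduces to DC, to AC along a wellorderable family (i.e., AC$_\gamma$ for some ordinal $\gamma$, which follows from DC$_\gamma$ by Lemma~\ref{dcac} together with the wellorderable-cover hypothesis), or is subsumed by the explicit partition produced above. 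Since the theorem of this section is designed to replace exactly the Solovay-style stationary splitting used by Woodin, I expect this to go through without serious difficulty.
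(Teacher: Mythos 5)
Your overall strategy is the same as the paper's: feed the splitting from Theorem \ref{claimonelem} into Woodin's proof of Kunen's theorem. But there is a genuine gap at the decisive point, namely the \emph{size} of the partition. Woodin's argument does not merely need some partition of $C^{\lambda^{+}}_{\{\omega\}}$ into disjoint stationary sets; it needs the partition to be indexed by an ordinal $h$ with $j[h] \neq j(h)$, so that some piece $S'_{\iota}$ of the $j$-image partition carries an index $\iota$ outside the range of $j$ (in the paper this index is $\kappa_{0}$, the critical point). That extra piece is stationary, hence meets the club of ordinals closed under $j$; its members of countable cofinality are fixed by $j$, and each such fixed point already lies in $S'_{j(\beta)}$ for the $\beta$ indexing the original piece containing it --- contradicting disjointness. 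If instead $h < \kappa$, then $j(h) = h$ and $j[h] = h$, the image partition has exactly the same index set with every index in the range of $j$, and no contradiction is available. In particular, ``comparing membership at the $j$-fixed ordinal $\lambda$'' yields nothing, and your statement that $j$ moves the ordinals in $[\kappa, \lambda^{+})$ is false: $j$ fixes an $\omega$-club of them, which is precisely what the argument exploits.

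Your construction only guarantees $h = h(\lambda^{+}) < \cf(\lambda^{+})^{+}$ many pieces, and the only thing you prove about $\cf(\lambda^{+})$ is that it is uncountable (that DC argument is fine). ZF + DC alone does not rule out $\cf(\lambda^{+}) < \kappa$; since the critical point is a regular limit cardinal (a standard elementarity argument that survives in ZF), that scenario forces $h < \kappa$ and kills the proof. This is exactly why the paper does not stop at uncountable cofinality: it first shows, by considering $j(\pi)$ for a putative surjection $\pi$, that no $V_{\alpha}$ with $\alpha < \kappa_{\omega}$ surjects onto $\kappa_{\omega}$, and then invokes Lemma 2.13 of \cite{Sh835} --- this is where the wellorderable-cover hypothesis earns its keep beyond supplying $\psi(\kappa_{\omega}^{+},\omega)$ --- to conclude that $\kappa_{\omega}^{+}$ is \emph{regular}. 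With regularity in hand, part 1 of Theorem \ref{claimonelem} plus a recursion of length $\kappa_{\omega}^{+}$ produces a partition indexed by all of $\kappa_{\omega}^{+}$, so $\kappa_{0}$ itself is an available index and the contradiction goes through. To repair your proof, replace the $\lambda^{++}$/part-2 detour by this regularity step; the ``audit of Woodin's argument'' you anticipate is not where the difficulty lies.
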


\begin{proof}
Suppose towards a contradiction that $j \colon V \to V$ is an
elementary embedding. Let $\kappa_{0}$ be the critical point of $j$,
and for each nonzero $n < \omega$, let $\kappa_{n+1} =
j(\kappa_{n})$.
%or simpler $j(\theta([\kappa_{n}]^{\omega})$.
%(or $\theta([\kappa_{m}]^{8})$ in other version.
Let $\kappa_{\omega} = \cup \{ \kappa_{n} : n < \omega \}$. Then
$j(\kappa_{\omega}) = \kappa_{\omega}$ and $j(\kappa_{\omega}^{+}) =
\kappa_{\omega}^{+}$. For no $\alpha < \kappa_{0}$ is there is a
surjection from $V_{\alpha}$ onto $\kappa_{0}$ (to see this,
consider $j(\pi)$, where $\pi$ is such a surjection, in light of the
fact that $j \restrict V_{\kappa_{0}}$ is the identity function). By
elementarity, then, the same is true for each $\kappa_{n}$, and so
the same is true for $\kappa_{\omega}$. Then by the results of
\cite{Sh835} (specifically, Lemma 2.13),  $\kappa_{\omega}^{+}$ is
regular.

%Then
%\begin{enumerate} \item $\kappa^{+}_{\omega}$ is regular, \item
%$j[\kappa_{\omega}] = \kappa_{\omega}$, hence
%$j(\kappa^{+}_{\omega}) = \kappa^{+}_{\omega}$.\end{enumerate}

%Let $\lambda = \kappa^{+}_{\omega}$. \begin{enumerate} \item
%$j(\lambda) = \lambda > cr(j)$, \item $\lambda = \cf(\lambda) >
%\omega$ \end{enumerate}
%$S = \{ \delta < \lambda \mid \cf(\delta) = \aleph_{2} \}$. By [835], there is \begin{enumerate} \item
Let $\bar{C} = \langle c_{\delta} : \delta \in
C^{\kappa_{\omega}^{+}}_{\{\omega\}}\rangle$ witness
$\psi(\kappa_{\omega}^{+}, \omega)$. Applying Theorem
\ref{claimonelem}, let $n_{*} \in \omega$ and $\bar{\alpha} =
\langle \alpha_{\xi} : \xi < \kappa_{\omega}^{+} \rangle$ be such
that $\bar{\alpha}$ is a continuous increasing sequence of elements
of $\kappa_{\omega}^{+}$ and such that $S^{n_{*}}_{\alpha_{\xi},
\alpha_{\xi + 1}}(\bar{C})$ is a stationary subset of
$C^{\kappa_{\omega}^{+}}_{\{\omega\}}$ for each $\xi < \kappa_{\omega}^{+}$.

%let $\langle S_{\xi} : \xi < \lambda \rangle$ as there, so
%$min(S_{\xi}) < 1 + \xi$, $\cap_{\xi < \tau} S_{\xi} \cap S_{\tau} =
%\emptyset$, each $S_{\xi} \subseteq \{ \delta < \lambda : \cf(\delta) =
%\omega\}$ stationary, $\cup S_{\delta} = S = \{ \delta < \lambda
%: \cf(\delta) = \omega\}$. This is what we use.

Let $F$ be the set of limit ordinals $\delta < \kappa_{\omega}^{+}$
such that $j(\alpha) < \delta$ for every $\alpha < \delta$. Then $F$
is a club. Let $E$ be the set of members of $F$ of cofinality less
than $\kappa_{0}$. Then $j \restrict E$ is the identity function,
and no stationary subset of $C^{\kappa_{\omega}^{+}}_{\{\omega\}}$ is
disjoint from $E$.

%Every member of $C$ whose cofinality is less than the critical point
%of $j$ is a fixed point of $j$.

%$E = \{ \delta \in C^{\lambda}_{\omega} : j(\delta) \neq
%\delta\}$ is not stat. Why: $j \restrict \lambda$ is order
%preserving from $\lambda$ to $\lambda$. As $\cf(\lambda) >
%\omega$,  As $cr(j) > \omega$, $j \restrict (E \cap S)$ is
%the identity.

%Let $\langle S_{\xi} : \xi < \lambda \rangle$ be a sequence of
%pairwise disjoint stationary subsets of $C^{\lambda}_{\omega}$,
%and

Let $\langle S'_{\xi} : \xi < \kappa_{\omega}^{+}\rangle = j(\langle
S^{n_{*}}_{\alpha_{\xi}, \alpha_{\xi + 1}}(\bar{C}) : \xi <
\kappa_{\omega}^{+}\rangle)$.
%Let $j(S) = j(\langle S_{\xi} : \xi < \lambda\rangle)$ be $\langle
%S'_{\xi} : \xi < j(\lambda) = \lambda \rangle$.
As $j$ is an elementary embedding, $V \models ``S'_{\kappa_{0}}$ is
a stationary subset of $C^{\kappa_{\omega}^{+}}_{\{\omega\}}$ disjoint from $S'_{\xi}$ for $\xi
\in \kappa_{\omega}^{+} \setminus \{\kappa_{0}\}$." Hence,
$S'_{\kappa_{0}}$ is disjoint from $S'_{j(\xi)}$, for all $\xi <
\kappa_{\omega}^{+}$. But
$$\bigcup_{\xi < \kappa_{\omega}^{+}}S'_{j(\xi)} \supset \bigcup_{\xi <
\kappa_{\omega}^{+}}(S'_{j(\xi)} \cap E) = \bigcup_{\xi <
\kappa_{\omega}^{+}} (S^{n_{*}}_{\alpha_{\xi}, \alpha_{\xi + 1}}(\bar{C})
\cap E) = E \cap C^{\kappa_{\omega}^{+}}_{\{\omega\}}.$$

\end{proof}

\section{Club guessing}

%We let DC$_{\leq\omega_{1}}$ denote the statement that every tree of
%height $\omega_{1}$ has a maximal branch.

In this section we show that the standard club-guessing arguments go
through under weak forms of Choice plus the existence of ladder
systems. Theorem \ref{guessingdc} uses forms of DC, and Theorem
\ref{guessingac} uses AC.

\begin{thrm}[ZF]\label{guessingdc}
Let $\theta < \lambda$ be regular cardinals, with $\theta^{+} <
\lambda$, and suppose that DC$_{\theta^{+}}$ holds. Suppose that
$\langle c_{\delta} : \delta \in C^{\lambda}_{\{\theta\}}\rangle$ is
a sequence such that each $c_{\delta}$ is a closed cofinal subset of
$\delta$ of ordertype less than $\theta^{+}$. Then the following
hold.

\begin{itemize}
\item There exists a
sequence $\langle d_{\delta} : \delta \in
C^{\lambda}_{\{\theta\}}\rangle$ such that each $d_{\delta}$ is
a cofinal subset of $\delta$, and such that for every club subset $D
\subseteq \lambda$ there is a $\delta \in
C^{\lambda}_{\{\theta\}}$ with $d_{\delta} \subseteq D$.

\item If $\theta$ is uncountable, then there exists a
sequence $\langle d_{\delta} : \delta \in
C^{\lambda}_{\{\theta\}}\rangle$ such that each $d_{\delta}$ is a
closed cofinal subset of $c_{\delta}$, and such that for every club
subset $D \subseteq \lambda$ there is a $\delta \in
C^{\lambda}_{\{\theta\}}$ with $d_{\delta} \subseteq D$.

%\item If AC$_{X}$ holds, where $X$ is the set of pairs
%$(\gamma, C)$, where $\gamma < \theta^{+}$ and $C \subseteq \gamma$ is
%club, then there exists a sequence $\langle d_{\delta} : \delta \in
%C^{\lambda}_{\{\theta\}}\rangle$ such that each $d_{\delta}$ is a
%closed cofinal subset of $c_{\delta}$, and such that for every club
%subset $D \subseteq \lambda$ there is a $\delta \in
%C^{\lambda}_{\{\theta\}}$ with $d_{\delta} \subseteq D$.
\end{itemize}
\end{thrm}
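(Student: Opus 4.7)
The plan is to adapt the classical Shelah club-guessing iteration, checking that the requisite choices fit into DC$_{\theta^{+}}$. I focus on the second (stronger) clause; the first follows by a parallel argument with relaxed constraints on the $d_{\delta}$'s.

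Suppose toward a contradiction that no such $\bar{d}$ exists. I build by recursion on $\zeta<\theta^{+}$ clubs $E_{\zeta}\subseteq\lambda$ (with $E_{0}=\lambda$) and sequences $\bar{c}^{\zeta}=\langle c^{\zeta}_{\delta}\rangle$, starting from $\bar{c}^{0}=\bar{c}$, such that each $c^{\zeta}_{\delta}$ is a closed cofinal subset of $c_{\delta}$. At stage $\zeta+1$, the contrary assumption applied to $\bar{c}^{\zeta}$ supplies a club $E_{\zeta+1}$ with $c^{\zeta}_{\delta}\not\subseteq E_{\zeta+1}$ for every relevant $\delta$; define $c^{\zeta+1}_{\delta}:=c^{\zeta}_{\delta}\cap E_{\zeta+1}$, which for $\delta\in\operatorname{acc}(E_{\zeta+1})$ is again closed cofinal in $\delta$ and is a proper subset of $c^{\zeta}_{\delta}$. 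At a limit $\zeta$, set $c^{\zeta}_{\delta}:=\bigcap_{\eta<\zeta}c^{\eta}_{\delta}$; the uncountability of $\theta$ makes this closed cofinal in $\delta$ whenever $\cf(\zeta)<\theta$, and stages with $\cf(\zeta)=\theta$ are handled by a Fodor-style restriction to the stationary set of $\delta$ whose intersection remains cofinal.

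The $\theta^{+}$-length dependent construction is legitimised by DC$_{\theta^{+}}$ applied to the tree of partial builds of length less than $\theta^{+}$. For the contradiction, note that for any $\delta$ for which every $c^{\zeta}_{\delta}$ stays cofinal in $\delta$, the successor increments $c^{\zeta}_{\delta}\setminus c^{\zeta+1}_{\delta}$ are pairwise disjoint nonempty subsets of $c_{\delta}$, which has size at most $\theta<\theta^{+}$---impossible. Hence no $\delta$ can survive all $\theta^{+}$ stages; but an intersection-of-clubs argument in ZF (valid since $\theta^{+}<\lambda$ is regular, using DC$_{\theta^{+}}$ to assemble the sequence of clubs recording survival) would guarantee a stationary set of survivors, contradiction.

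The main obstacle is precisely the limit-stage cofinality issue for $\cf(\zeta)=\theta$, which is where the second clause needs $\theta$ uncountable; the first clause drops the requirement that $d_{\delta}$ be contained in $c_{\delta}$, allowing $d_{\delta}$ to include diagonal points drawn from the $E_{\zeta}$'s and so tolerate $\theta=\omega$. A secondary bookkeeping check is that every invocation of choice---the recursive picks of $E_{\zeta}$ and the club intersection in the endgame---stays within the DC$_{\theta^{+}}$ budget, which is satisfied because every selected sequence has length at most $\theta^{+}$.
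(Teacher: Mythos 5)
Your argument for the second bullet is essentially the classical club-guessing recursion and, with routine bookkeeping, it works: for $\theta$ uncountable, $c_{\delta}\cap E$ is club in $\delta$ whenever $\delta$ is a limit point of the club $E$; at stage $\zeta$ you restrict the candidate to those $\delta$ that are limit points of $\bigcap_{\eta\le\zeta}E_{\eta}$ (extending by $c_{\delta}$ elsewhere so the contrary assumption applies); the total intersection $\bigcap_{\zeta<\theta^{+}}E_{\zeta}$ is club because $\theta^{+}<\lambda=\cf(\lambda)$; any $\delta\in C^{\lambda}_{\{\theta\}}$ that is a limit point of it survives all stages, and then your disjoint-increments count against $o.t.(c_{\delta})<\theta^{+}$ gives the contradiction. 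Your ``Fodor-style'' worry at limit stages is a red herring: for the $\delta$ one ultimately uses, $c^{\zeta}_{\delta}$ is the intersection of just \emph{two} clubs in $\delta$, namely $c_{\delta}$ and $(\bigcap_{\eta\le\zeta}E_{\eta})\cap\delta$, so no degeneration occurs at limits. Note also that your route is the reverse of the paper's: the paper proves the first bullet directly and then obtains the second from it by intersecting $c_{\delta}$ with the guessed sets.

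The genuine gap is the first bullet, and it is not a ``weaker'' clause obtainable by relaxing constraints: for uncountable $\theta$ it already follows from the second bullet, so its entire content is the case $\theta=\omega$, which your method cannot reach. When $\cf(\delta)=\omega$, two clubs in $\delta$ can be disjoint, so $c^{\zeta}_{\delta}\cap E_{\zeta+1}$ can fail to be cofinal already at \emph{successor} steps --- the obstruction is not confined to limit stages of cofinality $\theta$ as you claim --- and ``diagonal points drawn from the $E_{\zeta}$'s'' is never specified, so no parallel argument is actually given. The missing idea (the paper's) is to replace intersections by projections: set $gl(c_{\delta},E)=\{\sup(\alpha\cap E)\mid\alpha\in c_{\delta}\setminus(\min(E)+1)\}$, which is a cofinal subset of $\delta$ contained in $E$ whenever $\delta$ is a limit point of $E$, though no longer a subset of $c_{\delta}$ (exactly why this yields only the first bullet). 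Run the recursion with candidates $\langle gl(c_{\delta},E_{\zeta})\rangle$, requiring $E_{\zeta+1}$ to lie inside the limit points of $E_{\zeta}$. The contradiction then comes not from disjoint increments but from stabilization: for each $\alpha\in c_{\delta}$ the map $\zeta\mapsto\sup(\alpha\cap E_{\zeta})$ is nonincreasing, hence eventually constant, and since $o.t.(c_{\delta})\le\theta<\theta^{+}$ and $\theta^{+}$ is regular (which follows in ZF from AC$_{\theta}$, hence from DC$_{\theta^{+}}$), there is a single $\zeta<\theta^{+}$ past which all these values (and the finitely relevant cases $\alpha\le\min(E_{\zeta})$) have stabilized; then $gl(c_{\delta},E_{\zeta})=gl(c_{\delta},E_{\zeta+1})\subseteq E_{\zeta+1}$, contradicting the choice of $E_{\zeta+1}$. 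Without this substitute for intersection, your proposal proves only the second bullet of the theorem.
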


\begin{proof} We argue as in \cite{Shg}, Chapter III.

For the first part, for any two sets $A$, $B$, let $gl(A, B)$ denote
the set $$\{ \sup(\alpha \cap B) \mid \alpha \in A \setminus
(\min(B) + 1)\}.$$ Note that if $A$ and $B \cap \gamma$ are club
subsets of an ordinal $\gamma$, then $gl(A, B)$ is a club subset of
$B \cap \gamma$ as well.

Supposing that the first conclusion of the theorem is false, choose
for each $\zeta \leq \theta^{+}$ a club subset $D_{\zeta} \subseteq
\lambda$ such that the following conditions are satisfied.
\begin{itemize}
\item $D_{0}$ does not contain $c_{\delta}$ for any $\delta \in
C^{\lambda}_{\{\theta\}}$; \item for each $\zeta < \theta^{+}$,
$D_{\zeta + 1}$ is contained in the limit points of $D_{\zeta}$, and
$D_{\zeta + 1}$ does not contain $gl(c_{\delta}, D_{\zeta})$ for any
$\delta \in C^{\lambda}_{\{\theta\}}$ which is a limit point of
$D_{\zeta}$.
\item for each limit ordinal $\zeta \leq \theta^{+}$, $D_{\zeta} =
\bigcap_{\xi < \zeta}D_{\xi}$.
\end{itemize}
Now fix a $\delta \in C^{\lambda}_{\{\theta\}}$ which is a limit
point of $D_{\theta^{+}}$. For each $\alpha \in c_{\delta}$, either
there is a $\zeta < \theta^{+}$ such that $\alpha \leq
\min(D_{\zeta})$, or $\langle \sup(\alpha \cap D_{\zeta}) : \zeta <
\theta^{+}\rangle$ is a nonincreasing sequence which reaches an
eventually constant value. Since $|c_{\delta}| < \theta^{+}$, there
is a $\zeta < \theta^{+}$ such that for each $\alpha \in
c_{\delta}$, $\alpha > \min(D_{\zeta})$ implies $\alpha >
\min(D_{\zeta + 1})$, and, if $\alpha > \min(D_{\zeta})$, then
$\sup(\alpha \cap D_{\zeta}) = \sup(\alpha \cap D_{\zeta + 1})$.
Then $gl(c_{\delta}, D_{\zeta}) = gl(c_{\delta}, D_{\zeta + 1})$.
However, $gl(c_{\delta}, D_{\zeta + 1}) \subseteq D_{\zeta + 1}$ and
$D_{\zeta +1}$ was chosen not to contain $gl(c_{\delta},
D_{\zeta})$, giving a contradiction.

For the second part, note that we can just take the intersection of
$c_{\delta}$ and $d_{\delta}$ for each $\delta \in
C^{\lambda}_{\{\theta\}}$, where $d_{\delta}$ is given by the first
part.
\end{proof}

\begin{ques} Does DC$_{\theta}$ suffice for Theorem \ref{guessingdc}?
\end{ques}
%\footnote{Does DC suffice? For
%$C^{\lambda}_{\theta}$ if $\langle c_{\delta} : \delta \in
%C^{\lambda}_{\theta} \rangle$ exists, $\theta = \cf(\theta) > \omega$
%corser. Claim: We can replace $\omega$ by any regular cardinal
%$\sigma$ if DC$_{\sigma}$ holds.}

\begin{thrm}[ZF]\label{guessingac} Suppose that
\begin{itemize}
\item  $\theta < \lambda$ are regular uncountable cardinals;
\item there is no surjection from $\mathcal{P}(\theta)$ onto
$\lambda$; \item AC$_{X}$ holds, where $X$ is the union of
$\theta^{+}$ and the set of club subsets of $\theta$;
%set of pairs
%$(\gamma, C)$ such that $\gamma \in C^{\theta^{+}}_{\{\theta\}}$ and
%$C \subseteq \gamma$ is club;
\item $\langle c_{\delta} : \delta \in
C^{\lambda}_{\{\theta\}}\rangle$ is a sequence such that each
$c_{\delta}$ is a closed cofinal subset of $\delta$ of ordertype
less than $\theta^{+}$.
\end{itemize}
%If AC$_{X}$ holds, where $X$ is the set of pairs $(\gamma, C)$,
%where $\gamma < \theta^{+}$ and $C \subseteq \gamma$ is club, then
Then there exists a sequence $\langle e_{\delta} : \delta \in
C^{\lambda}_{\{\theta\}}\rangle$ such that each $e_{\delta}$ is a
closed cofinal subset of $c_{\delta}$ of ordertype $\theta$, and
such that for every club subset $D \subseteq \lambda$ there is a
$\delta \in C^{\lambda}_{\{\theta\}}$ with $e_{\delta} \subseteq D$.
\end{thrm}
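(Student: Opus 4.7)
My plan is to reduce the guessing problem to a family of candidate sequences parameterized by clubs of $\theta$, and then to show that at least one of these candidates works. Using the AC$_{\theta^{+}}$ part of AC$_{X}$, choose for each $\alpha < \theta^{+}$ of cofinality $\theta$ a continuous increasing cofinal function $\sigma_{\alpha} \colon \theta \to \alpha$. For each $\delta \in C^{\lambda}_{\{\theta\}}$, write $\alpha_{\delta} = o.t.(c_{\delta})$ (automatically less than $\theta^{+}$ and of cofinality $\theta$, since $c_{\delta}$ is closed cofinal in $\delta$) and define $f_{\delta} \colon \theta \to \delta$ by $f_{\delta}(\eta) = c_{\delta}(\sigma_{\alpha_{\delta}}(\eta))$. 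The enumeration of a closed set of ordinals is continuous, so $f_{\delta}$ is itself continuous, strictly increasing, and cofinal in $\delta$; for every club $E$ of $\theta$, the set $e^{E}_{\delta} = f_{\delta}[E]$ is a closed cofinal subset of $c_{\delta}$ of ordertype $\theta$.

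Suppose toward a contradiction that no $\langle e^{E}_{\delta} : \delta \in C^{\lambda}_{\{\theta\}}\rangle$ is a club-guessing sequence. Then for each club $E$ of $\theta$ there is a club $D^{E} \subseteq \lambda$ with $e^{E}_{\delta} \not\subseteq D^{E}$ for every $\delta$; fix such $\langle D^{E} : E \text{ a club of } \theta\rangle$ using the clubs-of-$\theta$ portion of AC$_{X}$. Let $D^{*} = \bigcap_{E} D^{E}$, which is closed in $\lambda$. To show $D^{*}$ is unbounded, fix $\alpha < \lambda$ and define $\beta_{\xi}$ for $\xi \leq \theta^{+}$ by $\beta_{0} = \alpha + 1$, $\beta_{\xi + 1} = \sup\{\min(D^{E} \setminus \beta_{\xi}) : E \text{ a club of } \theta\}$, and $\beta_{\xi} = \sup_{\eta < \xi}\beta_{\eta}$ at limits. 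Extending $E \mapsto \min(D^{E} \setminus \beta_{\xi})$ trivially to a function $g \colon \mathcal{P}(\theta) \to \lambda$, any unbounded range for $g$ would, by regularity of $\lambda$, have ordertype exactly $\lambda$ and hence give a surjection $\mathcal{P}(\theta) \twoheadrightarrow \lambda$, contradicting the hypothesis; so $\beta_{\xi + 1} < \lambda$. Limit stages stay below $\lambda$ because $\theta^{+} < \lambda$ and $\lambda$ is regular. Each $D^{E}$ meets $[\beta_{\xi}, \beta_{\xi + 1}]$ for every $\xi < \theta^{+}$, so $\beta_{\theta^{+}}$ is a limit of $D^{E}$-points and, by closedness, lies in $D^{E}$; hence $\beta_{\theta^{+}} \in D^{*}$, and $D^{*}$ is a club of $\lambda$.

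To derive the contradiction, use a further unique recursion of length $\theta$ inside the club $D^{*}$ to obtain $\delta \in D^{*}$ with $\cf(\delta) = \theta$ that is a limit point of $D^{*}$. Then $D^{*} \cap \delta$ is a club of $\delta$, and $\mathrm{range}(f_{\delta})$ is a club of $\delta$; since $\cf(\delta) = \theta$ is uncountable, clubs of $\delta$ are closed under pairwise intersection, so $D^{*} \cap \mathrm{range}(f_{\delta})$ is a club of $\delta$. Pulling back through the continuous increasing bijection $f_{\delta} \colon \theta \to \mathrm{range}(f_{\delta})$ produces a club $E_{\delta}$ of $\theta$ with $f_{\delta}[E_{\delta}] \subseteq D^{*}$, so $e^{E_{\delta}}_{\delta} \subseteq D^{*} \subseteq D^{E_{\delta}}$, contradicting the choice of $D^{E_{\delta}}$. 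The main obstacle is the unboundedness of $D^{*}$: the essential point is that the no-surjection hypothesis, combined with the regularity of $\lambda$, forces every function from $\mathcal{P}(\theta)$ into $\lambda$ to have bounded range, and this is exactly what controls the sup defining $\beta_{\xi + 1}$ at each successor stage.
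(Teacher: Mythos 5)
Your proof is correct and takes essentially the same route as the paper's: regularize each $c_{\delta}$ to ordertype $\theta$ using the AC$_{\theta^{+}}$ part of the hypothesis, parameterize the candidate guessing sequences by clubs of $\theta$, choose counterexample clubs via AC$_{X}$, intersect them using the no-surjection hypothesis, and derive the contradiction at a cofinality-$\theta$ limit point of the intersection by pulling back to a club of $\theta$. The only difference is expository: you spell out the unboundedness of the intersection and the existence of the limit point $\delta$, steps the paper asserts in a single line.
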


\begin{proof} Applying AC$_{X}$, let $\bar{D} = \langle
d_{\delta} : \delta \in C^{\theta^{+}}_{\{\theta\}} \rangle$ be such
that each $d_{\delta}$ is a club subset of $\delta$ of ordertype
$\theta$. For each $\delta \in C^{\lambda}_{\{\theta\}}$, let
$c'_{\delta} = \{ c_{\delta}(\eta) : \eta \in d_{o.t.(c_{\delta})}\}$. Then each $c'_{\delta}$
is a closed, cofinal subset of $\delta$ of ordertype $\theta$.

%For each $\gamma \in C^{\theta^{+}}_{\{\theta\}}$, let $S_{\gamma} =
%\{ \delta < C^{\lambda}_{\{\theta\}} : otp(c_{\delta}) = \gamma\}$,
For each $\delta\in C^{\lambda}_{\{\theta\}}$ and for each club $C
\subseteq \theta$, let $c(C)_{\delta} = \{ c'_{\delta}(\beta)  \mid
\beta \in C\}$.
%$$c(\gamma, C)_{\delta} = \{ \beta \in c_{\delta} \mid
%otp(c_{\delta} \cap \beta) \in C\}.$$
Supposing that the conclusion fails, choose $\langle E_{C} : C
\subseteq \theta \text{ club} \rangle$ such that each $E_{C}$ is a
club subset of $\lambda$ not containing $c(C)_{\delta}$ for any
$\delta \in C^{\lambda}_{\{\theta\}}$. As there is no surjection
from $\mathcal{P}(\theta)$ to $\lambda$,
$$E = \bigcap\{ E_{C} : C \subseteq \theta \text{ club}\}$$ is a club subset
of $\lambda$. Let $\delta$ be any limit member of $E$ in
$C^{\lambda}_{\{\theta\}}$, and let $$C = \{ \alpha < \theta\mid
c'_{\delta}(\alpha) \in E\}.$$ Then $c(C)_{\delta} = c'_{\delta}
\cap E \subseteq E_{C}$, contradicting the choice of $E_{C}$.
\end{proof}

%2) As in (1) but instead of DC$_{\theta^{+}}$ we have AC$_{X}$,
%where $X = \{(\gamma, C) : \gamma < \theta^{+}, C \subseteq \gamma
%\text{ club}\}$. Why: for each $x = (\gamma_{x}, C_{x})$, $S_{x} =
%\{ \delta < \lambda : otp(C_{\delta}) = \gamma_{x}\}$, $C_{x,\delta}
%= \{ \beta \in C_{\delta} : otp(C_{\delta} \cap \beta) \in C_{x}\}$,
%if $\langle C_{x,\delta} : \delta \in S_{x}\rangle$ succeed, fine -
%if not, choose, $\langle E_{x} : x \in X \rangle$, $E_{x}$ a club of
%$\lambda$ exemplifying the failure.

%3) As in (1) but $\theta = \aleph_{0}$. [Why: as in [1, III], use
%$gl(c_{\delta}, E) = \{ sup(\alpha \cap E) : \alpha \in c_{\delta}
%\setminus (min(E) + 1)\}$ so we choose by DC$_{\aleph_{1}}$,
%$\langle E_{\zeta} : \zeta < \omega_{1} \rangle$ such that $E_{\zeta
%+ 1} \subseteq acc(E_{\zeta})$ witness $\langle gl(C_{\delta},
%E_{\xi}) : \delta \in C^{\lambda}_{\{\theta\}}, \delta \in
%acc(E_{\xi}) \rangle$ fails.]

\section{Splitting at higher cofinalities}

In this section we consider the problem of using a ladder system to
split $C^{\lambda}_{\{\theta\}}$ into stationary sets without the
help of AC and DC. So the difference is that we try to split at
cofinality $\theta$ without DC$_{\theta}$.

\begin{thrm}[ZF]\label{claimtwo} Suppose that the following hold.
\begin{itemize} \item $\theta < \lambda$ are regular uncountable cardinals;
\item  $\gamma \in [\theta, \lambda)$ is an ordinal;
\item $\bar{C} = \langle c_{\delta} : \delta \in
C^{\lambda}_{\{\theta\}}\rangle$ is a sequence such that each
$c_{\delta}$ is a cofinal subset of $\delta$ of ordertype less than
or equal to $\gamma$.
\end{itemize}

%et $$\bar{C} =
%\langle c_{\delta} : \delta \in C^{\lambda}_{\omega} \rangle$$
%and $\upsilon < \lambda$ be such that
%\begin{itemize}
%\item each $c_{\delta}$ is a cofinal subset of $\delta$ \item
%$\upsilon = sup\{ otp(c_{\delta}) : \delta \in S \}$. \end{itemize}
%\item $\upsilon < \lambda$ is a regular uncountable cardinal;
%\item $\sigma \in (\upsilon, \lambda)$ is a regular cardinal.

%suppose that if $\omega < \theta < \gamma$ then there is a
%regular $\sigma \in (\gamma, \lambda)$.
%\begin{enumerate}
%%\item $\lambda = \cf(\lambda) > \omega$,
%%\item $S = C^{\lambda}_{\omega} := \{ \delta < \lambda :
%%\cf(\delta) = \aleph \}$ \item \begin{enumerate}
%\item $\bar{C} =
%\langle c_{\delta} : \delta \in S \rangle$, \item $\gamma = sup\{
%otp(c_{\delta}) : \delta \in S \} < \lambda$, \item $sup(c_{\delta})
%= \delta$ \end{enumerate} \item $\theta = \cf(\theta) < \lambda$,
%$C^{\lambda} = \{ \delta < \lambda : \cf(\delta) = \theta\}$
%\item if $\omega < \theta < \gamma$ then there is a regular
%$\sigma \in (\gamma, \lambda)$ \end{enumerate}
Then either \begin{enumerate} \item there exist $\eta < \gamma$ and
a continuous increasing sequence $\langle \alpha_{\xi} : \xi <
\lambda \rangle$ such that each $\alpha_{\xi} \in \lambda$ and each
$S^{\eta}_{\alpha_{\xi}, \alpha_{\xi + 1}}(\bar{C})$ is stationary,
or
\item the following two statements hold:
%we can find a
%sequence of length $\lambda$ of pairwise disjoint subsets of
%$C^{\lambda}_{\theta}$ stationary in $\lambda$,
\begin{enumerate} \item for some club $E \subseteq \lambda$ there exists a
regressive function $F$ on $E \cap C^{\lambda}_{\{\theta\}}$ such
that $F^{-1}\{\beta\}$ not stationary for any $\beta < \lambda$, and
\item if AC$_{\gamma}$ holds, then for
some $\alpha_{*} < \lambda$ there is a regressive function $G$ on
$C^{\lambda}_{\{\theta\}} \setminus (\alpha_{*} + 1)$ such that for
each $\beta < \lambda$ the set of $\gamma \in
C^{\lambda}_{\{\theta\}}$ such that $G(\gamma) < \beta$ is not
stationary.
\end{enumerate}
\end{enumerate}
\end{thrm}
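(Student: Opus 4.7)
The plan is to perform a case split on whether the weaker conclusion of Theorem \ref{claimonelem}(1) is available, and, if not, to build explicit regressive functions whose values (or whose preimages of initial segments) are trapped inside the $S$-sets that must be nonstationary. Suppose first that there is an $\eta^* < \gamma$ such that for every $\alpha < \lambda$ some $\beta \in (\alpha, \lambda)$ makes $S^{\eta^*}_{\alpha, \beta}(\bar{C})$ stationary. Then I would derive conclusion (1) by a transfinite recursion with no appeal to choice: set $\alpha_0 = 0$, let $\alpha_{\xi+1}$ be the least $\beta > \alpha_\xi$ for which $S^{\eta^*}_{\alpha_\xi, \beta}(\bar{C})$ is stationary, and take suprema at limit stages. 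The successor step is definable from the data, and the sequence stays below $\lambda$ because $\lambda$ is regular.

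Otherwise, for each $\eta < \gamma$ let $\alpha^*_\eta < \lambda$ be the least $\alpha$ such that $S^\eta_{\alpha, \beta}(\bar{C})$ is nonstationary for every $\beta \in (\alpha, \lambda)$; by Replacement the sequence $\langle \alpha^*_\eta : \eta < \gamma\rangle$ exists in ZF, and its supremum $\alpha_*$ satisfies $\alpha_* < \lambda$ because $\gamma < \lambda = \cf(\lambda)$. For $\delta \in C^\lambda_{\{\theta\}}$ with $\delta > \alpha_*$ let $\eta(\delta)$ be the least $\eta$ with $c_\delta(\eta) > \alpha_*$; this is well-defined (since $c_\delta$ is cofinal in $\delta$) and $\eta(\delta) < \gamma$.

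For part (2)(a) I propose to encode the pair $(\eta(\delta), c_\delta(\eta(\delta)))$ into a single ordinal below $\delta$. Take $E$ to be the club $\{\delta < \lambda : \gamma \cdot \delta = \delta \text{ and } \delta > \alpha_*\}$ and set $F(\delta) = \gamma \cdot c_\delta(\eta(\delta)) + \eta(\delta)$ on $E \cap C^\lambda_{\{\theta\}}$. Regressivity of $F$ on $E$ is a routine ordinal-arithmetic check, and by the uniqueness of the decomposition $\xi = \gamma \cdot \beta' + \eta$ with $\eta < \gamma$, the fiber $F^{-1}\{\xi\}$ is contained in $\{\delta : c_\delta(\eta) = \beta'\} \subseteq S^\eta_{\alpha^*_\eta, \beta' + 1}(\bar{C})$, using $\beta' > \alpha_* \ge \alpha^*_\eta$, and is therefore nonstationary. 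The crucial point of the pair-coding is that each single fiber lies inside a single $S$-set rather than inside a $\gamma$-indexed union, so no form of choice is required.

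For part (2)(b) I would take the simpler $G(\delta) = c_\delta(\eta(\delta))$ on $C^\lambda_{\{\theta\}} \setminus (\alpha_* + 1)$; this is regressive without needing to restrict to any club. Then $\{G < \beta\} \subseteq \bigcup_{\eta < \gamma} S^\eta_{\alpha^*_\eta, \beta}(\bar{C})$, a union of $\gamma$ nonstationary sets. Under AC$_\gamma$ one simultaneously picks witnessing clubs $D_\eta$; their intersection is a club in $\lambda$ because $\gamma < \cf(\lambda) = \lambda$, and this intersection is disjoint from the union, so $\{G < \beta\}$ is nonstationary. The main obstacle in the whole argument is exactly this last step: without AC$_\gamma$ a union of $\gamma$ nonstationary sets need not be nonstationary, which is precisely why the pair-coding is essential to deliver (2)(a) in plain ZF while AC$_\gamma$ is invoked for (2)(b).
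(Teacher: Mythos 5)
Your proposal is correct and is essentially the paper's own argument: the same recursion in the first case, the same ordinals $\alpha^{*}_{\eta}$ and $\alpha_{*}$, and for (2)(b) the same $G$ with AC$_{\gamma}$ used exactly where the paper uses it. For (2)(a) your function $\delta \mapsto \gamma \cdot c_{\delta}(\eta(\delta)) + \eta(\delta)$ is precisely the paper's composition $H \circ F$ with $H(\alpha,\eta) = \gamma\cdot\alpha + \eta$, and your club of ordinals closed under $\gamma$-multiplication coincides with the paper's club $E$, so the difference is purely presentational.
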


\begin{proof}

Suppose first that there exists a $\eta < \gamma$ such that for each
$\alpha < \lambda$ there exists a $\beta \in (\alpha, \lambda)$ such
that $S^{\eta}_{\alpha, \beta}(\bar{C})$ is a stationary subset of
$\lambda$. Then we can recursively choose $\alpha_{\xi} < \lambda$ $(\xi <
\lambda)$, increasing continuously with $\xi$,
such that $\alpha_{0} = 0$ and
$$\alpha_{\xi + 1} = min\{ \alpha : \alpha_{\xi} < \alpha <
\lambda \wedge S^{\eta}_{\alpha_{\xi}, \alpha}(\bar{C})\text{ is
stationary}\}.$$ Then the first conclusion of the lemma holds.

Suppose instead that there is no such $\eta$. Then for each $\eta <
\gamma$, let $\alpha^{*}_{\eta} < \lambda$ be minimal such that for
all $\beta \in (\alpha^{*}_{\eta}, \lambda)$,
$S^{\eta}_{\alpha^{*}_{\eta}, \beta}(\bar{C})$ not a stationary
subset of $\lambda$. Let $\alpha_{*} = sup\{ \alpha^{*}_{\eta} :
\eta < \gamma \}$. Then $\alpha_{*} < \lambda$, as $\lambda =
\cf(\lambda) > \gamma$.

Define $F \colon C^{\lambda}_{\{\theta\}}\setminus (\alpha_{*} + 1)
\to \lambda \times \gamma$ by letting $F(\delta) = (\alpha, \eta)$
if $\alpha$ is the least element of $c_{\delta}$ greater than
$\alpha_{*}$ and $\alpha = c_{\delta}(\eta)$. Then for no $(\alpha,
\eta) \in \lambda \times \gamma$ is $F^{-1}\{ (\alpha, \eta)\}$
stationary.

Let $H \colon \lambda \times \gamma \to \lambda$ be the function
$H(\alpha, \eta) = \gamma \cdot \alpha + \eta$, and let $E$ be the
set of $\alpha \in (\alpha_{*}, \lambda)$ such that $H(\beta, \eta)
< \alpha$ for all $\beta < \alpha$ and $\eta < \gamma$. Then $E$ is
a club set. Furthermore, the function $H \circ F$ is regressive on
$E \cap C^{\lambda}_{\{\theta\}}$ and not constant on a stationary
set, as desired.

Finally, suppose that AC$_{\gamma}$ holds. For each $\beta \in
(\alpha_{*}, \lambda)$ and each $\eta < \gamma$,
$S^{\eta}_{\alpha_{*}, \beta}(\bar{C})$ is nonstationary. It follows
(from AC$_{\gamma}$) that for each $\beta \in (\alpha_{0},
\lambda)$, $S_{\beta} = \bigcup_{\eta < \gamma}S^{\eta}_{\alpha_{*},
\beta}(\bar{C})$ is nonstationary. Now define $G \colon
C^{\lambda}_{\{\theta\}}\setminus (\alpha_{*} + 1) \to \lambda$ by
letting $G(\delta)$ be the least element of $c_{\delta}$ greater
than $\alpha_{*}$. Then for every $\beta \in \lambda$, the set of
$\delta \in C^{\lambda}_{\{\theta\}} \setminus (\alpha_{*} + 1)$ with
$G(\delta) < \beta$ is nonstationary.
\end{proof}

\section{A model of ZF and a regressive function}

In this section we give a proof of the following theorem, which is
complementary to Theorem \ref{claimtwo}.

\begin{thrm}[ZFC] Let $\theta < \lambda$ be regular cardinals.
There is a partial order $P$ such that in the $P$-extension of $V$
there is an inner model $M$ with the following properties.
\begin{itemize}
\item $M$ and $V$ have the same ordinals of cofinality $\theta$;
\item $\lambda$ is a regular cardinal in $M$;
\item $M$ satisfies ZF + DC$_{\less\theta}$ + $\phi(\lambda,f)$,
where $f$ is the ordinal successor function on the regular cardinals
below $\theta$;
\item there exists in $M$ a regressive function
on $(C^{\lambda}_{[\theta, \lambda)})^{M}$ which is not constant on
a stationary set.
\end{itemize}
\end{thrm}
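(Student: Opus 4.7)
The plan is to construct $M$ as a symmetric submodel of a forcing extension $V[G]$, where the forcing directly adjoins a regressive function on $(C^{\lambda}_{[\theta,\lambda)})^V$ together with a family of clubs witnessing nonstationarity of each fiber, and where the symmetric structure ensures that the individual clubs survive but their $\lambda$-indexed sequence does not — so that the standard diagonal-intersection proof of Fodor's lemma fails in $M$.

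Concretely, let $C = (C^{\lambda}_{[\theta,\lambda)})^V$, and let $P$ consist of conditions $p = (F_p, \langle D^p_\beta : \beta \in r_p\rangle)$ where $F_p$ is a regressive partial function $C \to \lambda$ of size $\less\theta$, $r_p \in [\lambda]^{\less\theta}$, each $D^p_\beta$ is a closed bounded subset of $\lambda$, and $F_p^{-1}\{\beta\} \cap D^p_\beta = \emptyset$ for each $\beta \in r_p$. The order is componentwise extension and $P$ is $\less\theta$-closed; arranging additional chain conditions, or first working above a suitable initial collapse, ensures $\lambda$ remains regular in $V[G]$. The automorphism group is generated by $\operatorname{Sym}(\lambda)$ acting on $P$ by simultaneously permuting the codomain of $F_p$ (post-composition) and the indexing of $\bar D^p$; the filter of subgroups is generated by pointwise stabilizers $\operatorname{Stab}(e)$ of $e \in [\lambda]^{\less\theta}$, and $M$ is the resulting symmetric submodel. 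Routine checks then yield: (1) $V$ and $M$ agree on cofinality-$\theta$ ordinals, from $\less\theta$-closure of $P$; (2) $\lambda$ is regular in $M$ (a collapsing map for $\lambda$ into a smaller ordinal would have a hereditarily symmetric name of bounded support, which the filter excludes); (3) DC$_{\less\theta}$ holds in $M$, from $\less\theta$-completeness of the filter; (4) $\phi(\lambda,f)$ is witnessed in $M$ by a sequence drawn from $V$ (such a sequence exists there by AC applied to the regular cardinals below $\theta$, and it lies in $M$ since it is in $V$). Each individual $D_\beta$ lies in $M$ because its canonical name is fixed by $\operatorname{Stab}(\{\beta\})$, so every fiber $F^{-1}\{\beta\}$ is nonstationary in $M$; on the other hand, the sequence $\langle D_\beta : \beta < \lambda\rangle$ has no hereditarily symmetric name, which is precisely why the diagonal-intersection argument for Fodor's lemma cannot be run in $M$.

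The main obstacle, and where I expect the real technical work to lie, is producing a symmetric name for the regressive function $F$ itself: under the obvious post-composition action, the canonical name for $F$ is sent by $\pi$ to the name for $\pi \circ F$, so the naive presentation of $F$ is not fixed by any nontrivial element of $\operatorname{Sym}(\lambda)$. The resolution I would pursue is to redesign the $F$-component of $P$ so that $F(\delta)$ is determined by generic data internal to $\delta$ — for instance, a generic selection of an element from a ground-model ladder at $\delta$ (using the hypothesis that $V$ is a ZFC model, such ladders exist) — and then read off into $\lambda$ via a fixed ground-model identification, so that the $\operatorname{Sym}(\lambda)$-action on the $\beta$-indexing of the clubs can be decoupled from the invariant name for $F$. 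Verifying that this decoupling can actually be implemented consistently — so that $F$ is symmetric, the disjointness $F^{-1}\{\beta\} \cap D_\beta = \emptyset$ is still forced, and the sequence $\langle D_\beta\rangle$ continues to escape $M$ — is the crux of the construction; once it is in place, the remaining verifications are standard symmetric-forcing bookkeeping.
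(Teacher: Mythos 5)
You have correctly located the crux, but you have not crossed it: the whole difficulty of this theorem is to produce a model containing the regressive function $F$ while omitting any $\lambda$-indexed sequence of clubs witnessing the nonstationarity of its fibers, and your proposal leaves precisely this step unresolved. Under the action you define (post-composition on the values of $F_p$ coupled with permutation of the club indices), the canonical name for $F$ is not fixed by any stabilizer $\operatorname{Stab}(e)$ with $e \in [\lambda]^{<\theta}$: such a subgroup contains permutations moving values of $F$ outside $e$, and these send the name of $F$ to a name for $\pi \circ F$. So $F$ does not belong to your symmetric model, and the last bullet of the theorem fails. (Worse, post-composition by an arbitrary $\pi \in \operatorname{Sym}(\lambda)$ does not even preserve the forcing, since $\pi \circ F_p$ need not be regressive; your group is not a group of automorphisms of $P$ without further restriction.) The ``decoupling'' repair you sketch --- making $F(\delta)$ a generic selection from a ground-model ladder at $\delta$, read into $\lambda$ via a fixed identification --- is a plan, not an argument: any fixed identification turning local generic data into an ordinal value reinstates exactly the coupling between values of $F$ and club indices that the automorphisms must respect, and you give no reason why the disjointness constraints can still be forced or why the indexed sequence $\langle D_\beta : \beta < \lambda \rangle$ still escapes the model. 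Since you yourself call this ``the crux of the construction,'' the proposal is missing its key step.

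The paper dissolves the tension with two moves absent from your proposal. First, the clubs are indexed by \emph{pairs}: $X = \lambda \times \lambda$, with the club $C_{\alpha,\beta}$ required to avoid $F^{-1}\{\alpha\}$, so each fiber is killed by $\lambda$ many mutually interchangeable clubs. The relevant isomorphisms permute only the second coordinate $\beta$ within each fixed $\alpha$; they preserve the disjointness constraints and fix $F$ pointwise, while still destroying any particular indexing of the clubs. Second, no symmetric name for $F$ is needed at all, because $M$ is not built as a symmetric submodel: it is defined outright in $V[F,\bar{C}]$ as $M_{0} = L(\bar{D}, F, {}^{<\theta}\{C_{\alpha,\beta} : (\alpha,\beta) \in X\}, {}^{<\theta}\mathrm{Ord})$ (or as a HOD-style variant $M_{1}$), with $F$ and the ground-model ladder system $\bar{D}$ put in by hand, and the clubs entering only through the unordered set and its sequences of length $<\theta$. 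The homogeneity given by permuting the $\beta$'s is then used exactly where it is really needed (Lemma \ref{twosix}): every subset of $V$ lying in $M$ is already in $V[F, \langle C_{\alpha,\beta} : (\alpha,\beta) \in Y\rangle]$ for some $Y$ of size $<\theta$, and such small extensions add no new $<\lambda$-sequences of ordinals over $V$; that is what yields the regularity of $\lambda$ in $M$. Note that this also shows your appeal to ``additional chain conditions, or first working above a suitable initial collapse'' is beside the point: $\lambda$ need not remain regular in the full extension $V[G]$; only its regularity in $M$ matters, and that follows from the factorization argument, not from any chain condition.
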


The strategy for the proof is a direct modification of Cohen's
original proof of the independence of AC (see \cite{Jec03}).

Assume that ZFC holds and that $\theta < \lambda$ are
regular cardinals. Given a set $X \subseteq \lambda \times \lambda$,
let $P_{X}$ be the partial order whose conditions consist of pairs
$(f,d)$ such that
\begin{itemize}
\item $f$ is a partial regressive function on $C^{\lambda}_{[\theta, \lambda)}$
whose domain is $\alpha \cap C^{\lambda}_{[\theta, \lambda)}$ for
some successor ordinal $\alpha< \lambda$;

\item $d$ is a partial function whose domain is a subset of
$X$ of cardinality less than $\lambda$ such that for each
$(\alpha,\beta)$ in the domain of $d$, $d(\alpha, \beta)$ is a
closed, bounded subset of $max(\dom(f))+1$ disjoint from
$f^{-1}\{\alpha\}$.
\end{itemize}
The order on $P_{X}$ is given by: $(f,d) \leq (g, e)$ if $g \subseteq
f$, $\dom(e) \subseteq \dom(d)$ and $d(\alpha,\beta) \cap
(max(\dom(g))+1) = e(\alpha, \beta)$ for all $(\alpha,\beta) \in
\dom(e)$.

The partial order $P_{X}$ is closed under decreasing sequences of
length less than $\theta$ and therefore does not add sets of
ordinals of cardinality less than $\theta$. Furthermore, if
$|X|^{+}<\lambda$, then below densely many conditions (conditions
$(f,d)$ with $|\dom(f)| > |X|$) every descending sequence in $P_{X}$
of length less than $\lambda$ has a lower bound, so $P_{X}$ does not
add sequences from $V$ of length less than $\lambda$. We will see
below that $P_{X}$ is in some sense homogeneous.

Given $X \subseteq \lambda \times \lambda$ and a regressive function
$F$ on $C^{\lambda}_{[\theta,\lambda)}$, let $Q_{F,X}$ denote the
partial order whose conditions are partial functions $d$ with domain
a subset of $X$ of cardinality less than $\lambda$, such that for
each $(\alpha,\beta)$ in the domain of $d$, $d(\alpha,\beta)$ is a
closed, bounded subset of $\lambda$ disjoint from
$F^{-1}\{\alpha\}$. If $X$ is a subset of $\lambda \times \lambda$
such that $|X|^{+} < \lambda$, and $Y\subseteq \lambda \times \lambda$
is disjoint from $X$, then, since $P_{X}$ does not add bounded
subsets of $\lambda$, $P_{X \cup Y}$ is forcing-isomorphic to $P_{X}
* Q_{\dot{F}, Y}$, where $\dot{F}$ represents the generic regressive
function added by $P_{X}$.

%If $X \prec H((2^{|P|})^{+})$ is uncountable, $|X| < \lambda$ and
%$X$ is closed under sequences from $V$ of cardinality less than
%$|X|$, then any $X$-generic filter for $P$ extends to a condition.
%Therefore, $P_{\lambda}$ does not add any sequences from $V$ of
%cardinality less than $\lambda$.

Let $\bar{D} = \langle d_{\delta} : \delta \in
C^{\lambda}_{\{\theta\}}\rangle$ be a sequence in $V$ such that each
$d_{\delta}$ is a cofinal subset of $\delta$ of ordertype
$\cf(\delta)$. For any set or class $Q$, we let ${^{\less\theta}Q}$ denote the set or class of functions
whose domain is an ordinal less than $\theta$ and whose range is
contained $Q$. We let $Ord$ denote the class of ordinals. 

A $V$-generic filter for $P_{X}$ is naturally represented by a pair
$(F,\bar{C})$, where $F$ is a regressive function on
$(C^{\lambda}_{[\theta,\lambda)})^{V}$, $\bar{C}$ has the form
$\langle C_{\alpha,\beta} : (\alpha,\beta) \in X\rangle$, and each
$C_{\alpha,\beta}$ is a club subset of $\lambda$ disjoint from
$F^{-1}\{\alpha\}$.  Fixing such a pair, we will define (in $V[F, \bar{C}]$) two models which
satisfy the theorem as $M$.

Let $M_{0}$ be $L(\bar{D}, F, {^{\less\theta}\{ C_{\alpha, \beta} : (\alpha, \beta) \in X\}}, {^{\less\theta}Ord})$. 
%where $Q$ is the transitive closure of the set whose members
%are $\bar{D}$, $F$
%%every element of $V$,
%and every function from $\gamma$ to $\{ C_{\alpha,\beta} : (\alpha,
%\beta) \in X\}$, for any $\gamma < \theta$. 
Let $M_{1}$ be the class of sets in 
$V[F, \bar{C}]$ which are hereditarily definable from the parameters 
$\bar{D}$, $F$, some member of ${^{\less\theta}Ord}$ and some member of ${^{\less\theta}\{ C_{\alpha, \beta} : (\alpha, \beta) \in X\}}$.
%all functions from 
%$\gamma$ to $\{ C_{\alpha, \beta} : (\alpha, \beta) \in X\}$. 
These are both 
models of ZF (see pages 182, 193 and 195-196 of \cite{Jec03}; note that 
$$M_{0} = \bigcup_{\gamma \in Ord}L(\bar{D}, F, {^{\less\theta}\{ C_{\alpha, \beta} : (\alpha, \beta) \in X\}}, {^{\less\theta}\gamma}),$$ and $M_{1}$ is an 
analogous union). 

Every set in $M_{0}$ is
definable in $M_{0}$ from $\bar{D}$, $F$, a member of
${^{\less\theta}Ord}$, the unordered set $\{ C_{\alpha, \beta} : (\alpha, \beta) \in X\}$ 
and a member of ${^{\less\theta}\{ C_{\alpha, \beta} : (\alpha, \beta) \in X\}}$.
%function from some $\gamma < \theta$ to $\{
%C_{\alpha,\beta} : (\alpha,\beta) \in X\}$. 
It follows that $M_{0}$ is
closed under sequences of length less than $\theta$ in $V[F,
\bar{C}]$, and therefore that $M_{0}$ satisfies DC$_{\less\theta}$.
Since $\bar{D}$ is in $M_{0}$, and since $V$ and $V[F, \bar{C}]$ have
the same ordinals of cofinality less than $\theta$, $M_{0}$ satisfies
$\phi(\lambda, f)$, where $f$ is the ordinal successor function on
the regular cardinals below $\theta$.
%(moreover, $M$ has a ladder system
%for each cofinality below $\theta$).
Since $V[F, \bar{C}]$ and $V$ have the same sequences of ordinals of
length less than $\theta$, $M_{0}$ is definable in $V[F, \bar{C}]$ from
$\bar{D}$, $F$ and the (unordered) set $\{ C_{\alpha, \beta} :
(\alpha, \beta) \in X\}$.

Every set in $M_{1}$ is
definable in $V[F, \bar{C}]$ from $\bar{D}$, $F$, a member of
${^{\less\theta}Ord}$ and a member of ${^{\less\theta}\{ C_{\alpha, \beta} : (\alpha, \beta) \in X\}}$.
%function from a $\gamma < \theta$ to $\{
%C_{\alpha\beta} : (\alpha,\beta) \in X\}$. 
It follows that $M_{1}$ is
closed under sequences of length less than $\theta$ in $V[F,
\bar{C}]$, and therefore that $M_{1}$ satisfies DC$_{\less\theta}$.
Since $\bar{D}$ is in $M_{1}$, and since $V$ and $V[F, \bar{C}]$ have
the same ordinals of cofinality less than $\theta$, $M_{1}$ satisfies
$\phi(\lambda, f)$, where $f$ is the ordinal successor function on
the regular cardinals below $\theta$.
%(moreover, $M$ has a ladder system
%for each cofinality below $\theta$).
Since $V[F, \bar{C}]$ and $V$ have the same sequences of ordinals of
length less than $\theta$, $M_{1}$ is definable in $V[F, \bar{C}]$ from
$\bar{D}$, $F$ and the (unordered) set $\{ C_{\alpha, \beta} :
(\alpha, \beta) \in X\}$.

%For the following claim, suppose that $X$ has the form $Z \times Z$,
%for some uncountable $Z \subseteq \lambda$.

Given $Y \subseteq X$, let $N_{Y}$ denote $V[F, \langle
C_{\alpha,\beta} :(\alpha,\beta) \in Y \rangle]$.

\begin{lem}\label{twosix} Suppose that $X = Z \times Z$, for some 
$Z \subseteq \lambda$, and that $(F, \bar{C})$ is $V$-generic for
$P_{X}$. Then every subset of $V$ in $M_{0} \cup M_{1}$ exists in $N_{Y}$ for some
$Y \subseteq X$ of cardinality less than $\theta$.
\end{lem}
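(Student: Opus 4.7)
The plan is a standard symmetric-model / Boolean-value argument using the factorization $P_X \cong P_{Y_0} \ast Q_{\dot F, X \setminus Y_0}$ and the fact that the parameter $T = \{C_{\alpha,\beta} : (\alpha,\beta) \in X\}$ appears only as an unordered set. Let $A \subseteq V$ lie in $M_0 \cup M_1$. By the definability analyses already in hand, there are a formula $\varphi$, an $s \in {^{\less\theta}Ord}$ and a $t \in {^{\less\theta}\{C_{\alpha,\beta}:(\alpha,\beta)\in X\}}$ for which
$$ x \in A \iff V[F,\bar C] \models \varphi(x, \bar D, F, s, T, t); $$
the parameter $T$ is needed only when $A \in M_0$, using that $M_0$ is itself definable in $V[F,\bar C]$ from $\bar D$, $F$, and $T$. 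Let $Y_0 \subseteq X$ consist of the indices of the clubs appearing as entries of $t$, so $|Y_0| < \theta$ and $\bar D, F, s, t$ all lie in $N_{Y_0}$. Viewing $V[F,\bar C]$ as the $Q_{F, X \setminus Y_0}$-extension of $N_{Y_0}$, I form the canonical $Q_{F, X \setminus Y_0}$-name $\dot A$ in $N_{Y_0}$ from $\varphi$ and the canonical names for $\bar D, F, s, t$, together with the canonical $Q_{F, X \setminus Y_0}$-name $\dot T$ for $T$.

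The key homogeneity observation is that every bijection $\sigma$ of $X \setminus Y_0$ preserving the first coordinate (of the form $(\alpha,\beta) \mapsto (\alpha, \sigma_\alpha(\beta))$ with each $\sigma_\alpha$ a bijection of $Z \setminus (Y_0)_\alpha$, where $(Y_0)_\alpha = \{\beta : (\alpha,\beta) \in Y_0\}$) induces an automorphism of $Q_{F, X \setminus Y_0}$, since the disjointness constraint $d(\alpha,\beta) \cap F^{-1}\{\alpha\} = \emptyset$ depends only on the first coordinate. Such a $\sigma$ fixes $\bar D, F, s, t$ pointwise and fixes $\dot T$, as $\dot T$ names an unordered set whose elements $\sigma$ merely permutes; hence $\sigma(\dot A) = \dot A$. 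If now $p \Vdash \check x \in \dot A$ and $q \Vdash \check x \notin \dot A$ for some $x \in V$, then since $|Z| = \lambda$ is regular and the supports of $p, q$ and the set $Y_0$ each have cardinality $<\lambda$, one can in each column $\alpha$ choose a $\sigma_\alpha$ moving $\dom(q)^{(\alpha)}$ into $Z \setminus (\dom(p)^{(\alpha)} \cup (Y_0)_\alpha)$. The resulting $\sigma(q)$ and $p$ then have disjoint supports, hence are compatible, while $\sigma(q) \Vdash \check x \notin \dot A$ by invariance, a contradiction. Therefore $\|\check x \in \dot A\| \in \{\mathbf 0, \mathbf 1\}$ for every $x \in V$, and $A \in N_{Y_0}$.

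The main obstacle is setting up the symmetry $\sigma(\dot A) = \dot A$ cleanly. This requires checking that $Q_{F, X \setminus Y_0}$ is genuinely invariant under permutations of $X \setminus Y_0$ preserving the first coordinate (the structure $X = Z \times Z$ and the column-only nature of the $F^{-1}\{\alpha\}$-disjointness constraint are essential here), and organizing the definability of $A$ so that $T$ appears only as an unordered set while all other parameters already lie in $N_{Y_0}$. Once that is in place, the remainder is the standard Boolean-value computation.
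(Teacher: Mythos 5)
Your proposal is correct in the case that matters and rests on the same two pillars as the paper's own proof: the factorization $P_{X}\cong P_{Y}\ast Q_{\dot{F},X\setminus Y}$ over a small set $Y$ of coordinates capturing the $<\theta$-sequence parameter, and the symmetry of the quotient $Q_{F,X\setminus Y}$ under permutations of $X\setminus Y$ that preserve first coordinates, which is exactly what makes the unordered set $T=\{C_{\alpha,\beta}:(\alpha,\beta)\in X\}$ a usable parameter. Where you differ is in how that symmetry is cashed in. The paper, given $d$ and $e$ deciding $\phi(\check{a},\dots)$ oppositely, extends them to $d'\leq d$ and $e'\leq e$ whose column-wise sets of values coincide, and uses the induced isomorphism $\pi$ between the cones below $d'$ and $e'$: generics corresponding under $\pi$ yield literally the same model $N_{Y}[G]$ with literally the same set $T$, so the opposite decisions are absurd. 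You instead run the textbook weak-homogeneity argument: a single global automorphism $\sigma$, column-preserving and chosen so that $\sigma[\dom(q)]$ is disjoint from $\dom(p)$, fixes the canonical names for $\bar{D}$, $F$, $s$, $t$ and $\dot{T}$ (hence $\dot{A}$), and makes $\sigma(q)$ compatible with $p$ via the union of the two conditions. Your route avoids the paper's value-matching construction (the least routine step of its proof) at the cost of setting up $\dot{A}$, $\dot{T}$ and their $\sigma$-invariance explicitly; the paper's route never needs to name $A$ or verify invariance of names, since the two transferred extensions are equal as models. The arguments are interchangeable here. One shared caveat: both need fresh coordinates in every column --- you to move $\dom(q)$ off $\dom(p)$, the paper to build the matching extensions $d',e'$ --- and this is where your explicit assumption $|Z|=\lambda$ enters, although the lemma allows any $Z\subseteq\lambda$. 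If $|Z|<\theta$ the lemma is trivial (take $Y=X$), but for $\theta\leq|Z|<\lambda$ conditions can occupy entire columns and neither your step nor the paper's literally goes through; since the application takes $X=\lambda\times\lambda$, this defect is inherited from the paper rather than introduced by you.
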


\begin{proof} Given such a set $A$, we can fix $Y \subseteq X$ of cardinality
less than $\theta$ such that $Y$ is of the form $W \times W$ for
some $W \subseteq \lambda$ and such that $A$ is definable in $V[F, \bar{C}]$ from
$F$, a set $x$ in $V$, $\{ C_{\alpha,\beta} : (\alpha,\beta) \in X\}$ and a function 
$h$ in $N_{Y}\cap{^{\less\theta}\{ C_{\alpha, \beta} : (\alpha, \beta) \in X\}}$.
%from $\gamma$ to $\{ C_{\alpha, \beta} : (\alpha, \beta) \in Y\}$, for some $\gamma < \theta$. 
Let $\phi$ be a
formula such that
$$A = \{ a \mid V[F, \bar{C}] \models \phi(a,F, x, \{ C_{\alpha, \beta} :
(\alpha, \beta) \in X\}, h)\}.$$ We have that $P_{X}$ is
forcing-equivalent to $P_{Y}
* Q_{\dot{F}, X \setminus Y}$. Suppose that there are two conditions
$d$ and $e$ in $Q_{F,X\setminus Y}$ (in $N_{Y}$) and some $a \in V$
such that $$d \forces \phi(\check{a}, \check{F}, \check{x}, \{ C_{\alpha, \beta} :
(\alpha, \beta) \in X\}, \check{h})$$ and $$e \forces
\neg\phi(\check{a}, \check{F}, \check{x}, \{ C_{\alpha, \beta} :
(\alpha, \beta) \in X\}, \check{h}).$$ There are conditions $d' \leq d$
and $e' \leq e$ in $Q_{F, X \setminus Y}$ such that \begin{itemize}
\item for every $(\alpha, \beta) \in \dom(d')$ there is a $\beta'$
such that $(\alpha, \beta')\in \dom(e')$ and $e'(\alpha, \beta') =
d'(\alpha, \beta)$, and
\item for every
$(\alpha, \beta) \in \dom(e')$ there is a $\beta'$ such that
$(\alpha, \beta')\in \dom(d')$ and $d'(\alpha, \beta') = e'(\alpha,
\beta)$.
\end{itemize}
There is then a natural isomorphism $\pi$ between $Q_{F,X \setminus
Y}$ below $d'$ and $Q_{F, X \setminus Y}$ below $e'$. This
isomorphism $\pi$ has the property that, given two generic filters
$G_{d'}$ and $G_{e'}$ for $Q_{F, X \setminus Y}$ with $\pi[G_{d'}] =
G_{e'}$, the (unordered) generic set $\{ C_{\alpha, \beta} :
(\alpha, \beta) \in X\setminus Y\}$ is the same in the two
extensions. Then $N_{Y}[G_{d'}] = N_{Y}[G_{e'}]$, and the set $\{ C_{\alpha, \beta} :
(\alpha, \beta) \in X\}$ is the same in these two extensions, 
contradicting the claim that $$d \forces \phi(\check{a}, \check{F}, \check{x}, \{ C_{\alpha, \beta} :
(\alpha, \beta) \in X\}, \check{h})$$ and $$e \forces
\neg\phi(\check{a}, \check{F}, \check{x}, \{ C_{\alpha, \beta} :
(\alpha, \beta) \in X\}, \check{h}).$$
\end{proof}

%Since $P_{\lambda,Y}$ does not add sequences from $V$ of cardinality
%less than $\lambda$, $(P_{\lambda, \lambda \setminus Y})^{N_{Y}} =
%(P_{\lambda, \lambda\setminus Y})^{V}$. Since $P_{\lambda,
%\lambda\setminus Y}$ is homogeneous, every subset of $V$ definable
%in $N_{\lambda}$ from $F$ and $\langle C_{\alpha} : \alpha \in
%Y\rangle$ exists already in $N_{Y}$.
%\end{proof}

It follows from Lemma \ref{twosix} that every 
sequence of ordinals in $M_{0} \cup M_{1}$ of length less than $\lambda$
is in $V$, so $\lambda$ is
a regular cardinal in $M_{0}$ and in $M_{1}$. In the case that $X = \lambda \times
\lambda$, then, $M_{0}$ and $M_{1}$ each satisfy ZF + DC$_{\less\theta}$ +
$\phi(\lambda,f)$, where $f$ is the ordinal successor function on
the regular cardinals below $\theta$, and the function $F$ is in both models 
a regressive function on $C^{\lambda}_{[\theta, \lambda)}$ which is
not constant on a stationary set.

%\begin{lem}[ZF]\label{omegathin} For every countable ordinal $\gamma$ there exists a
%sequence $\langle e_{\delta} : \delta < \gamma\rangle$ such that
%each $e_{\delta}$ is a cofinal subset of $\delta$ of orderype less
%than or equal to $\omega$.
%\end{lem}

%\begin{proof} If $\delta < \gamma$ is a successor ordinal, let
%$e_{\delta}$ be the set with the predecessor of $\delta$ as its only
%element. Let $\pi \colon \omega \to \gamma$ be a bijection. For each
%limit $\delta < \gamma$, let $c_{\delta}$ be the set of ordinals of
%the form $\pi(n)$, where $\pi(n) < \delta$ and $\pi(n) > \pi(m)$ for
%all $m < n$ with $\pi(m) < \delta$.
%\end{proof}

\end{document}